\documentclass[11pt]{amsart}
\usepackage{amssymb}
\usepackage{amsmath}
\usepackage{amsfonts,latexsym,amstext}

\usepackage [T1]{fontenc}
\usepackage [latin1]{inputenc}
\usepackage{color}


\newtheorem{theorem}{Theorem}[section]

\newtheorem{lemma}[theorem]{Lemma}
\newtheorem{proposition}[theorem]{Proposition}
\newtheorem{corollary}[theorem]{Corollary}

\pagestyle{headings}

\setlength{\oddsidemargin}{-0.5cm} \setlength{\topmargin}{-1cm}
\setlength{\evensidemargin}{-1.1cm} \setlength{\textwidth}{16.3cm}
\setlength{\textheight}{24.2cm}

\newcommand{\C}{\mathbb{C}}
\newcommand{\D}{\mathbb{D}}

\newcommand{\A}{\mathcal{A}}

\def\v{\overset\vee}

\def\M{\mathcal M}

\begin{document}


\title{Cluster values for algebras of analytic functions}

\author{Daniel Carando, Daniel Galicer, Santiago Muro, Pablo Sevilla-Peris}



\address{DEPARTAMENTO DE MATEM\'ATICA - PAB I,
FACULTAD DE CS. EXACTAS Y NATURALES, UNIVERSIDAD DE BUENOS AIRES, (1428) BUENOS AIRES, ARGENTINA AND CONICET} \email{dcarando@dm.uba.ar} \email{dgalicer@dm.uba.ar} \email{smuro@dm.uba.ar}

\address{INSTITUTO UNIVERSITARIO DE MATEM\'ATICA PURA Y APLICADA, UNIVERSITAT POLIT\`ECNICA DE DE VAL\`ENCIA, CMNO VERA S/N, 46022, VALENCIA, SPAIN}
\email{psevilla@mat.upv.es}

\keywords{Cluster value problem, Corona Theorem, Ball algebra, Analytic functions of bounded type,
Spectrum, Fiber.}

\subjclass[2010]{Primary 46J15, 46E50, 30H05}

\thanks{This work was partially supported by projects CONICET PIP 06
24, ANPCyT PICT 2011-1456, ANPCyT PICT 11-0738,
UBACyT 20020130300057BA, UBACyT20020130300052BA, UBACyT 20020130100474BA and MINECO and FEDER Project MTM2014-57838-C2-2-P.}

\begin{abstract}
The {\it Cluster Value Theorem} is known for being a weak version of the classical \emph{Corona Theorem}.
Given a Banach space $X$, we study the \emph{Cluster Value Problem} for the ball algebra $A_u(B_X)$, the
Banach algebra of all uniformly continuous holomorphic functions on the unit ball
$B_X$; and also for the Fr\'echet algebra $H_b(X)$ of holomorphic functions of bounded type on $X$ (more generally, for $H_b(U)$, the algebra of holomorphic functions of bounded type on a given balanced open subset $U \subset X$). We show that Cluster Value Theorems hold for all of these algebras whenever the dual of $X$ has the bounded approximation property. 
These results are an important advance in this problem, since the validity of these theorems was known only for trivial cases (where the spectrum
is formed only by evaluation functionals) and for the infinite dimensional Hilbert space.

As a consequence , we obtain weak \emph{analytic Nullstellensatz theorems} and several structural results for the spectrum of these algebras. 
\end{abstract}
\maketitle

\section*{Introduction}

In the 1940's a change of perspective in complex analysis emerged. S. Kakutani started a systematic  study, from a Banach algebra point of view, of the space  of bounded holomorphic functions on the open unit disk of the complex plane, $H^\infty(\D)$.

A prominent group of mathematicians (Singer, Wermer, Kakutani, Buck, Royden, Gleason, Arens and Hoffman) made several contributions in this line, and in 1961 published a very interesting paper \cite{Schark61} under the
fictitious name of I.~J.~Schark.
They showed that for each function $f\in H^\infty(\D)$, the set of evaluations $\varphi(f)$ of elements on the maximal ideal space lying in the fiber over a point $z\in\partial\D$ coincides with the cluster set of $f$ at $z$ (i.e., the set of all limits of values of $f$ along nets converging to $z$).
This result, called the \emph{Cluster Value Theorem}, was a sort of predecessor (and a weak version) of the famous Corona Theorem due to Carleson \cite{Carleson62}, which states that the characters given by evaluations on points of $\D$ are dense on the maximal ideal space of $H^\infty(\D)$.

Several domains where the Corona Theorem fails are known \cite{sibony1974prolongement} but it is unknown whether there is \emph{any} domain in $\C^n$, for $n \geq 2$, for which the  Corona Theorem holds. On the other hand, up to our knowledge, no domain is known for which the Cluster Value Theorem is not true.

In the context of infinite dimensional complex analysis, characterizing the cluster set is a non-trivial task, even for interior points. This fact led, in the last years,  many mathematicians in the area to study the \emph{Cluster Value Problem} for different algebras of analytic functions on infinite dimensional Banach spaces. Among their contributions, they showed positive results on the Cluster Value Theorem for the algebra $H^\infty(B_X)$ of bounded holomorphic functions on $B_X$, the
open unit ball of the Banach space $X$ \cite{AroCarGamLasMae12,johnson2015cluster,johnson2014cluster};  the ball algebra $A_u(B_X)$ of all uniformly continuous holomorphic functions on
$B_X$ \cite{AroCarGamLasMae12}, and the algebra $H_b(X)$ of holomorphic functions of bounded type on $X$ \cite{AroCarLasMae} (note that the first two, endowed with the norm $\Vert f \Vert = \sup_{x \in B_{X}} \vert f(x) \vert$ are Banach algebras and the last one, with the topology of uniform convergence on bounded sets, is a Fr\'echet algebra). These results were given for very specific Banach spaces. Namely, for the algebra $H^\infty(B_X)$, the known cases (besides finite dimensional spaces) are $c_0$ and $C(K)$, the space of null sequences and the space of continuous functions over a dispersed compact Hausdorff space $K$ respectively \cite{AroCarGamLasMae12,johnson2014cluster}.
For the algebras $A_u(B_X)$ and $H_b(X)$, the validity of the Cluster Value Theorem was known only for trivial cases (where the spectrum is formed only by evaluation functionals) and for the infinite dimensional Hilbert space \cite{AroCarGamLasMae12,AroCarLasMae}. Apart from these, there is (up to our knowledge) no general positive result, although there are some for the origin: in \cite[Theorem 3.1]{AroCarGamLasMae12} it is shown that the Cluster value Theorem for $A_u(B_X)$ holds at $0$ whenever $X$ is a
Banach space with a shrinking 1-unconditional basis (or, more generally, with a shrinking reverse monotone
FDD \cite[Corollary 2.4]{johnson2014cluster}). Analogous results (at the origin) were given in \cite{AroCarLasMae} for $H_b(X)$.

A main tool in \cite[Theorem 4.1]{AroCarGamLasMae12} to show that the Cluster Value Theorem for $A_u(B_{\ell_2})$ holds at every point of the ball is the existence of transitive analytic automorphisms (with some necessary properties) on $B_{\ell_2}$. Unfortunately, these ideas cannot be adapted to arbitrary Banach spaces (for example, to $\ell_p$ for $p\ne 2$), due to the lack of these automorphisms.

The purpose of this note is to exhibit that the Cluster Value Theorem holds for every Banach space $X$ whose dual has the bounded approximation property  for the algebras $A_u(B_X)$, $H_b(X)$ and $H_b(U)$ for any balanced open set $U$ of $X$ (see Theorems \ref{main1}, \ref{cvt-hb-bap} and \ref{CVT_HbU}).
This provides answers to several open problems in the area (in particular, it gives answers to the questions posed in \cite[Remark 2.5]{johnson2015cluster} and also \cite[Questions 7 and 8]{carando2012spectra}).

The proof of the main theorem does not rely on automorphisms on the ball of the space. Instead, we show in Theorem~\ref{equivalencia CVT} that if the Cluster Value Theorem holds for one of the algebras $A_u(B_X)$, $H_b(X)$ or $H_b(U)$ then it also holds for any of the others. Then, we exploit the fact that one can ``freely move'' in the whole space $X$ to show that the Cluster Value Theorem holds in $H_b(X)$. In some sense, we follow the spirit of \cite{AroColGam91}, where $H_b(X)$ is studied in order to describe the spectrum of some uniform algebras on $B_X$. Here, we go one step further in this line, since we obtain concrete results for $A_u(B_X)$ from studying the same problem in  $H_b(X)$.

We also present some consequences of the Cluster Value Theorem, as weak \emph{analytic Nullstellensatz} theorems on the corresponding algebras and some properties of their spectra.

The article is organized as follows. In Section \ref{Au} we state the Cluster Value Theorem and relate it with the Corona Theorem for $A_u(B_X)$. In Section \ref{Hb} we state and prove the Cluster Value Theorem for $H_b(X)$; this will give as a consequence the corresponding theorem for $A_u(B_X)$. In Section \ref{aplicaciones}  we use the Cluster Value Theorems  to obtain several results for the spectrum of the different algebras considered.

We refer the reader to \cite{AlKa06} and \cite{LinTza73} for the general theory of Banach spaces and to \cite{Din99} and \cite{Muj86} for the theory of polynomials and holomorphic functions on infinite dimensional spaces. We also refer to the survey \cite{carando2012spectra} for several recent developments
on the research of the spectra of algebras of analytic functions.

\section{The Cluster Value Theorem for $A_u(B_X)$}\label{Au}

We begin with some background and basic definitions.
Given a Banach or Fr\'{e}chet algebra $\A$ of holomorphic
functions on an open subset $U$ of a complex Banach space
$X$, we write $\M(\A)$ for the spectrum of $\A$, i.e. the set of all (continuous and non-zero) complex valued
homomorphisms on $\A$.
This is a subspace of the dual $\A^{*}$, which is endowed with the weak-star
topology $\sigma(\A^*,\A)$. If $\A$ contains $X^*$ then $\M(\A)$ is fibered over $X^{**}$, and we have the following commutative diagram:
\vspace{0.5cm}
\begin{center}
\begin{picture}(130,90)
\put(0,85){$U$} \put(110,85){$\M(\A)$} \put(120,15){$X^{\ast\ast}$}
\put(15,88){\vector(1,0){95}} \qbezier(15,88)(12,88)(12,91) \qbezier(12,91)(12,94)(15,94)
\put(123,83){\vector(0,-1){57}} \put(10,80){\vector(2,-1){110}}
\qbezier(10,80)(8.4,80.8)(9.2,82.4) \qbezier(9.2,82.4)(10,84)(11.6,83.2) \put(62,90){$\delta$}
\put(126,50){$\pi$}
\end{picture}
\end{center}
 where $\delta$ is the point evaluation mapping and $\pi$ is defined by $\pi
(\varphi)=\varphi |_{X^{\ast}} \in X^{\ast\ast}$, $\varphi \in \M(\A)$.

\bigskip
Let $\mathcal A$ be $A_u(B_X)$ or $H^\infty(B_X)$.

We say that the Corona Theorem holds for $\A$ whenever
\begin{equation} \label{coronastatement}
\M(\A)= \overline{\{\delta_x\colon
x\in B_X\}}^{\sigma(\A^*, \A)}.
\end{equation}
In other words, the evaluations on
points of $B_X$ form a dense set in $\M(\A)$.

The fiber of $\M(\A)$ over a point $z\in \overline{B_{X^{**}}}$ is the set 
\[
\M_z(\A) = \{ \varphi\in \M(\A) \, \colon \,   \pi(\varphi)=z \} \,.
\]
An obvious reformulation of the Corona Theorem  in terms of fibers is the following: for each $z\in \overline{B_{X^{**}}}$, we have
\[
\M_z(\A)\cap  \overline{\{\delta_x\colon
x\in B_X\}}^{\sigma(\A^*, \A)}= \M_z(\A).
\]
Inspired by this, we say that $\A$ satisfies the \emph{weak Corona Theorem} if for each $f\in \A$ and $z\in \overline{B_{X^{**}}}$ we have
\begin{equation}\label{weakCT}
\hat f\big(\M_z(\A)\cap  \overline{\{\delta_x\colon
x\in B_X\}}^{\sigma(\A^*, \A)}\big)= \hat{f}\big(\M_z(\A)\big),
\end{equation}
where $\hat f:\M(\mathcal{A}) \to \C$ is the Gelfand transform
of $f$, given by $\hat f(\varphi)=\varphi(f)$.

Note that if $\varphi$ belongs to $\M_z(\A)\cap  \overline{\{\delta_x\colon
x\in B_X\}}^{\sigma(\A^*, \A)}$, then there is a net $(x_{\alpha}) \subset B_X$ such that $\delta_{x_{\alpha}} \overset{\sigma(\A^* ,\A)}{\longrightarrow}  \varphi$, which necessarily satisfies $x_{\alpha} \overset{\sigma(X^{**},X^*)}{\longrightarrow} z$. Since $\M(\A)$ is compact, standard arguments show that the set at the left in \eqref{weakCT} coincides with the so called \emph{cluster set} of $f$ at $z$, which we define now.

For a fixed function $f\in \A$, and  $z \in \overline{B_{X^{**}}}$, the \textit{cluster set of $f$
at $z$} is the set $Cl(f,z)$ of all limits of values of $f$ along nets in $B_X$ weak-star converging to $z$; that is
\begin{equation} \label{cluster}
\begin{split}
 Cl(f,z)=\{\lambda\in\mathbb C\,:\, \textrm{there exists a net } & (x_\alpha)\subset B_{X} \\
 & \textrm{ such that }x_\alpha\overset{\sigma(X^{**},X^*)}{\longrightarrow}z,\textrm{ and } f(x_\alpha)\to\lambda\}.
\end{split}
\end{equation}

We always have (see \cite[Lemma 2.2]{AroCarGamLasMae12}) that
\begin{equation}\label{vale siempre Au}Cl(f,z)\subset \hat f(\M_{z}(\A)).
\end{equation}
Whenever the inclusion \eqref{vale siempre Au} is an equality, we say
that the \emph{Cluster Value Theorem holds in $\A$ at $z$.} If this happens for every $z \in \overline{B_{X^{**}}}$, we say simply that the Cluster Value Theorem holds in $\A$ or, equivalently, that the weak Corona Theorem \eqref{weakCT} holds in $\A$.

We now present our contribution to the Cluster Value Problem for $A_u(B_X)$.

\begin{theorem}\label{main1}
 Let $X$ be a Banach space whose dual has the bounded approximation property, then
$$Cl(f,z)=\hat f (\M_{z}(A_u(B_X)))\quad \text{for every }f\in A_u(B_X)\text{ and }z\in \overline{B_{X^{**}}}.$$
\end{theorem}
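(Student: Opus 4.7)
The plan is to use Theorem~\ref{equivalencia CVT} to reduce the problem from $A_u(B_X)$ to $H_b(X)$, and then to prove the Cluster Value Theorem for $H_b(X)$ by exploiting the freedom of movement in the entire space $X$: the bounded approximation property of $X^*$ will allow me to approximate by finite-rank factorisations and reduce to a finite-dimensional situation, where the Cluster Value Theorem is trivial.

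By Theorem~\ref{equivalencia CVT}, it is enough to show that $\hat f(\M_z(H_b(X))) \subset Cl(f,z)$ for every $f \in H_b(X)$ and every $z \in X^{**}$. Fix such an $f$ and a character $\varphi \in \M_z(H_b(X))$. Since $X^*$ has the bounded approximation property, one can select a net of finite-rank operators $P_\lambda \colon X \to X$ with $\sup_{\lambda} \|P_\lambda\| < \infty$ whose adjoints converge in norm to the identity on $X^*$, that is, $\|P_\lambda^* x^* - x^*\| \to 0$ for every $x^* \in X^*$. Set $Y_\lambda := P_\lambda(X)$ and consider the character $\varphi_\lambda \in \M(H_b(Y_\lambda))$ defined by $\varphi_\lambda(g) := \varphi(g \circ P_\lambda)$. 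Since $Y_\lambda$ is finite-dimensional, every character of $H_b(Y_\lambda)$ is a point evaluation, so there is $y_\lambda \in Y_\lambda \subset X$ with $g(y_\lambda) = \varphi(g \circ P_\lambda)$ for every $g \in H_b(Y_\lambda)$. Specialising to linear functionals, $x^*(y_\lambda) = \varphi(x^* \circ P_\lambda) = z(P_\lambda^* x^*) \to z(x^*)$ for every $x^* \in X^*$, so $y_\lambda \to z$ in $\sigma(X^{**}, X^*)$.

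The main obstacle is showing that $\varphi(f \circ P_\lambda) \to \varphi(f)$, or equivalently, since $\varphi$ is continuous on the Fr\'echet algebra $H_b(X)$, that $f \circ P_\lambda \to f$ uniformly on each bounded subset of $X$. Expanding $f$ in its Taylor series at the origin and using the uniform bound on $\|P_\lambda\|$ to dominate the tail, the problem reduces to the analogous convergence $Q \circ P_\lambda \to Q$ uniformly on bounded sets for every continuous homogeneous polynomial $Q$ on $X$. This is where the bounded approximation property of $X^*$ becomes essential: it implies that finite-type polynomials (finite sums of products of elements of $X^*$) are dense, uniformly on bounded sets, in the space of continuous $n$-homogeneous polynomials on $X$; and on finite-type polynomials the convergence follows directly from $P_\lambda^* \to \mathrm{id}_{X^*}$ in the strong operator topology, via a standard telescoping estimate bounding $|x_1^*(P_\lambda x)\cdots x_n^*(P_\lambda x) - x_1^*(x)\cdots x_n^*(x)|$ uniformly in $x$ on bounded sets. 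A standard $\varepsilon/2$ density argument, together with the uniform bound on $\|P_\lambda\|$, then upgrades the convergence to arbitrary continuous $Q$ and, in turn, to $f$ itself in the topology of $H_b(X)$.

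Combining the two ingredients, $f(y_\lambda) = \varphi(f \circ P_\lambda) \to \varphi(f)$ while $y_\lambda \to z$ in $\sigma(X^{**}, X^*)$, which exhibits $\varphi(f)$ as a cluster value of $f$ at $z$ in the sense of $H_b(X)$. Transferring this conclusion back to $A_u(B_X)$ through Theorem~\ref{equivalencia CVT} yields the statement of Theorem~\ref{main1}.
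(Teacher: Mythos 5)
Your reduction to $H_b(X)$ via Theorem~\ref{equivalencia CVT} matches the paper, and bringing in finite-rank operators from the bounded approximation property is also the right instinct. However, the central step of your argument --- that $f\circ P_\lambda \to f$ uniformly on bounded sets, hence $\varphi(f\circ P_\lambda)\to\varphi(f)$ --- fails. The density claim it rests on (that the bounded approximation property of $X^*$ makes finite-type polynomials dense in the continuous $n$-homogeneous polynomials for the topology of uniform convergence on bounded sets) is false: the closure of the finite-type polynomials consists of polynomials that are weakly continuous on bounded sets, and already on $X=\ell_2$ (whose dual has a basis) the polynomial $Q(x)=\sum_k x_k^2$ is not of this kind; with $P_n$ the coordinate projections one has $\sup_{\|x\|\le 1}|Q(x)-Q(P_nx)|=1$ for every $n$. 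There is also a structural warning sign: your net $y_\lambda$ does not depend on $f$, so if $\varphi(f\circ P_\lambda)\to\varphi(f)$ held for all $f$ you would conclude $\delta_{y_\lambda}\to\varphi$ weak-star for every $\varphi\in\M(H_b(X))$, i.e.\ the bounded Corona Theorem for $H_b(X)$, which the paper explicitly records as open.

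The paper circumvents exactly this obstacle. Instead of composing with a net $P_\lambda$ and passing to a limit, it uses, for a single fixed finite-rank $T$, the \emph{exact} identity $\varphi(f)=\varphi\bigl(x\mapsto f(x+T^{tt}z-Tx)\bigr)$ (Lemma~\ref{igual en funcional}), which follows from the algebraic decomposition $f(x)=f(x+T^{tt}z-Tx)+\sum_k(x_k^{*}(x)-z(x_k^{*}))g_k(x)$ together with the fact that $\varphi$ annihilates each factor $x_k^{*}(\cdot)-z(x_k^{*})$; no convergence of $f\circ T$ to $f$ is ever needed. The BAP enters only through Lemma~\ref{lema op finito}, to choose $T$ so that $x+T^{tt}z-Tx$ lands in a prescribed weak-star neighbourhood of $z$ intersected with $RB_X$ for all $x\in rB_X$. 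Finally, the one-function-at-a-time reduction of Lemma~\ref{CVTT} (if $0\notin Cl_{RB_X}(f,z)$ then $g=f(\cdot+T^{tt}z-T\cdot)$ is bounded away from $0$ on $rB_X$, hence invertible in $A_u(rB_X)$, hence $\varphi(f)=\varphi(g)\ne 0$) is what keeps the argument at the level of the Cluster Value Theorem rather than the Corona Theorem. Your proof would need to be rebuilt around an exact identity of this kind; as written, the key convergence step cannot be repaired.
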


This theorem is a direct consequence of Theorem~\ref{cvt-hb-bap}, which is the analogous result for $H_b(X)$,  and Theorem~\ref{equivalencia CVT}, which shows the equivalence of the Cluster Value Theorems for $A_u(B_X)$ and $H_b(X)$.

\medskip

Suppose again that $\mathcal A$ is equal to $A_u(B_X)$ or $H^\infty(B_X)$. It is well known that the Corona Theorem holds for $\A$ if and only if whenever  $f_1, \ldots, f_n
\in \A$ satisfy  $|f_1| + \cdots + |f_n| \ge \varepsilon > 0$ on $B_X$,  there
exist  $g_1, \ldots , g_n \in \A$ such that  $f_1 g_1 + \cdots + f_n
g_n = 1$. This can be seen as an \emph{analytic Nullstellensatz} theorem for the algebra $\A$.

We denote by $A(B_X)$ the closed subalgebra of $A_u(B_X)$ generated by $X^*$, that is, every element in $A(B_X)$ can be uniformly approximated by finite type polynomials.
It was shown in \cite[Lemma 2.3]{AroCarGamLasMae12} that the Cluster Value Theorem  for $A_u(B_X)$ is equivalent to the following weak form of the analytic Nullstellensatz theorem for the ball algebra. Therefore, as a consequence of Theorem \ref{main1}, we obtain:

\begin{corollary}\label{weak-null}
Let $X$ be a Banach space whose dual  has the bounded approximation property. Given  $f_1, \ldots, f_{n-1} \in A(B_X)$ and $f_n \in A_u(B_X)$ such that  $|f_1| + \cdots  +
|f_n| \ge \varepsilon > 0$ on $B_X$,  there exist  $g_1, \ldots , g_n \in
A_u(B_X)$ such that  $f_1 g_1 + \cdots + f_n g_n = 1$.
\end{corollary}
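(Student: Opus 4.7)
The plan is to deduce this directly from Theorem~\ref{main1} via the equivalence already cited from \cite[Lemma 2.3]{AroCarGamLasMae12}, arguing by contradiction. Suppose no $g_1,\dots,g_n\in A_u(B_X)$ satisfy $f_1g_1+\cdots+f_ng_n=1$. Then the ideal generated by $f_1,\dots,f_n$ is proper in the commutative unital Banach algebra $A_u(B_X)$, so it is contained in the kernel of some character $\varphi\in \M(A_u(B_X))$. Setting $z=\pi(\varphi)\in\overline{B_{X^{**}}}$, we have $\varphi\in \M_z(A_u(B_X))$ and $\hat f_i(\varphi)=0$ for every $i=1,\dots,n$.

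Next I would exploit the stronger regularity of the first $n-1$ functions. Since $f_1,\dots,f_{n-1}\in A(B_X)$ are uniform limits of finite-type polynomials, their Aron--Berner extensions $\tilde f_i$ are weak-star continuous on $\overline{B_{X^{**}}}$, and the Gelfand transform on a fiber is constant along $\pi^{-1}(z)$: namely $\hat f_i(\varphi)=\tilde f_i(z)$ for $i\le n-1$. Thus $\tilde f_i(z)=0$ for $i=1,\dots,n-1$.

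For the remaining function $f_n\in A_u(B_X)$, Theorem~\ref{main1} gives
\[
0=\hat f_n(\varphi)\in \hat f_n\bigl(\M_z(A_u(B_X))\bigr)=Cl(f_n,z),
\]
so there is a net $(x_\alpha)\subset B_X$ with $x_\alpha\xrightarrow{\sigma(X^{**},X^*)} z$ and $f_n(x_\alpha)\to 0$. By the weak-star continuity of the extensions $\tilde f_i$ on $\overline{B_{X^{**}}}$, we also have $f_i(x_\alpha)=\tilde f_i(x_\alpha)\to \tilde f_i(z)=0$ for each $i\le n-1$. Consequently $|f_1(x_\alpha)|+\cdots+|f_n(x_\alpha)|\to 0$, contradicting the hypothesis $|f_1|+\cdots+|f_n|\ge\varepsilon>0$ on $B_X$.

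The only step with real content is the Cluster Value Theorem for the single function $f_n\in A_u(B_X)$, i.e.\ Theorem~\ref{main1}; the rest is standard commutative Banach algebra together with the weak-star continuity of Aron--Berner extensions of elements of $A(B_X)$. In particular, the fact that we only need the cluster value identity for one function (while allowing the other $n-1$ to live in the smaller algebra $A(B_X)$) is precisely what makes this corollary a straightforward consequence rather than requiring a stronger joint version.
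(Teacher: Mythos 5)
Your proof is correct and follows essentially the same route as the paper, which simply invokes Theorem~\ref{main1} together with the equivalence of the Cluster Value Theorem and this weak Nullstellensatz established in \cite[Lemma 2.3]{AroCarGamLasMae12}. The only difference is that you unpack that cited equivalence explicitly (passing to a character annihilating the proper ideal, using weak-star continuity of the extensions of $f_1,\dots,f_{n-1}\in A(B_X)$, and applying the cluster value identity only to $f_n$), which is exactly the standard argument and mirrors the paper's own Lemma~\ref{CVTT} in the $H_b$ setting.
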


\section{The Cluster Value Theorem for $H_b(X)$}\label{Hb}

In this section we state and prove the Cluster Value Theorem for the space of entire functions of bounded type, together with the equivalence of the validity on the other algebras. We obtain Theorem~\ref{main1}  as a consequence.

For an open subset $U\subset X$, a $U$-bounded set is a bounded set $A\subset U$ whose distance to the boundary of $U$ is positive.

   The space of all complex valued holomorphic functions on $U$ which are bounded on $U$-bounded sets is denoted by $H_b(U)$. The space $H_b(U)$ is a Fr\'echet algebra when it is endowed with the topology of uniform convergence on $U$-bounded sets. 
Thus, for each homomorphism $\varphi\in\M(H_b(U))$, there exists a $U$-bounded subset $A$ such that
\begin{equation}\label{varphi<A}
  |\varphi(f) | \leq \Vert f \Vert_{A}, \quad\textrm{ for every }f\in H_b(U),
\end{equation}
where $\|f\|_{A}$ is the supremum of $|f|$ over the set $A$. We will write $\varphi\prec A$ when \eqref{varphi<A} happens.
Given $A \subset U$ we define

\[
\M_A(H_b(U)) = \{ \varphi\in \mathcal M(H_b(U))\,:\, \varphi\prec A \} \,.
\]
Also, given $z \in X^{**}$ we also define the \emph{$A$-fiber of $\mathcal M(H_b(U))$ over $z$} as the set
  $$
  \mathcal M_{z,A} (H_b(U)) =\{\varphi\in \mathcal M(H_b(U))\,:\, \pi(\varphi)=z\textrm{ and }\varphi\prec A\} = \M_A(H_b(U)) \cap \M_{z}(H_b(U)),
  $$
where, of course, $\M_{z}(H_b(U)) =\{\varphi\in \mathcal M(H_b(U))\,:\, \pi(\varphi)=z \}$.
We will often simply write $\mathcal M_{z,A}$ instead of $ \mathcal M_{z,A} (H_b(U)) $.

 Let $U\subset X$ be an open set, $f\in H_b(U)$  and  $z\in X^{**}$. For $A\subset U$, we define,

following \eqref{cluster}, the cluster set $Cl_A(f,z)$ as the set of all limits of values of $f$
along nets in $A$ converging weak-star to $z$, that is,
 $$
  Cl_A(f,z)=\{\lambda\in\mathbb C\,:\, \textrm{there exists a net }(x_\alpha)\subset A\textrm{ such that }x_\alpha\overset{\sigma(X^{**},X^*)}{\longrightarrow}z,\textrm{ and } f(x_\alpha)\to\lambda\}.
  $$
Note that $Cl_A(f,z)$ is empty if (and only if) $z$ does not belong to the weak-star closure of $A$.
Also, in the particular
case that $z$ actually belongs to $A$,  $f(z)$  belongs to
$Cl_A(f,z)$.
The cluster set  $Cl_A(f,z)$ can be seen as the intersection of the closures of $f(V \cap A)$, where $V$ ranges over any basis $\mathcal{V}$ of the $\sigma(X^{**}, X^{*})$ neighborhoods of $z$.

It is not difficult to see that, as in Equation \eqref{vale siempre Au}, we always have the inclusion
\begin{equation}\label{vale siempre}Cl_A(f,z)\subset \hat f(\M_{z,A}),
\end{equation} where $\hat f:\M(H_b(U)) \to \C$ is the Gelfand transform
of $f$ given by $\hat f(\varphi)=\varphi(f)$.
%
Therefore, we are
interested in the reverse inclusion. When the equality holds for every $f\in H_b(U)$, every $z\in U$ and every $U$-bounded set $A$,  we say
that the {\it Cluster Value Theorem} holds for $H_b(U)$. Note that the Cluster Value Theorem for $H_b(X)$ considered in \cite{AroCarLasMae}, which is stated as $Cl_r(f,z)=\hat f (\M_{z,r}) \quad\textrm{for every } f\in H_b(X),\, z\in X^{**} \textrm{ and }r>0,$ is implied by our version, just taking $A=rB_X$.

 The Corona Theorem holds for $H_b(X)$ if the set of evaluations at points of $X$ is dense in $\M(H_b(X))$. This holds trivially if $X$ is
finite dimensional since in this case $\M(H_b(X))=\{\delta_x\colon x\in X\}$. It is also easy to see that the Corona Theorem for $H_b(X)$ is true when the algebra generated by $X^*$ is dense in $H_b(X)$ (for example, if $X=c_0$ or if $X=T^*$, the original Tsirelson space), in which case the spectrum is identified with $X^{**}$. Moreover, it was proved in \cite[Corollary 4.7]{AroColGam91} that, for any Banach space $X$, any homomorphism $\varphi$ restricted to the set of analytic polynomials on $X$ is the limit of a net of evaluations at points of $X$. It is an open question whether the Corona theorem holds for $H_b(X)$ on any Banach space $X$ for which the spectrum does not coincide with $X^{**}$.
As we did in the previous section, it is easy to see that the Cluster Value Theorem for $H_b(X)$ is equivalent to a weak version of the bounded Corona Theorem for $H_b(X)$, which can be stated as follows:
let $A\subset X$ be bounded, then any homomorphism $\varphi\in \M(H_b(X))$ such that $\varphi\prec A$ can be approximated by a net of evaluations at points of $A$, or equivalently, for each bounded set $A\subset X$ and each $z\in X^{**}$, we have
$$
\M_{z,A}(H_b(X))\cap  \overline{\{\delta_x\colon
x\in A\}}^{\sigma(H_b(X)^*, H_b(X))}= \M_{z,A}(H_b(X)).
$$
Of course, since the bounded Corona Theorem for $H_b(X)$ is stronger than the Corona Theorem for $H_b(X)$, it is unknown to be true unless $\M(H_b(X))=\{\delta_x\colon x\in X^{**}\}$.

The next theorem is the main result of this section.

\begin{theorem}\label{cvt-hb-bap}
Let $X$ be a Banach space whose dual  has the bounded approximation property. Then
$Cl_A(f,z)=\hat f (\M_{z,A}) \quad\textrm{for every } f\in H_b(X),\, z\in X^{**} \textrm{ and every  bounded set }A.$
\end{theorem}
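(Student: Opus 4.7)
The plan is to prove the non-trivial inclusion $\hat f(\mathcal{M}_{z,A}) \subset Cl_A(f,z)$; the reverse is already granted by \eqref{vale siempre}. Fix $\varphi \in \mathcal{M}_{z,A}$, $f \in H_b(X)$, a weak-star neighbourhood $V$ of $z$ determined by finitely many functionals $\phi_1,\dots,\phi_N \in X^*$ and a tolerance $\delta$, and $\epsilon > 0$; it suffices to exhibit a single $a \in A \cap V$ with $|f(a) - \varphi(f)| < \epsilon$. I would combine two ingredients: the bounded approximation property of $X^*$, which supplies finite-rank approximants of the identity with good dual behaviour, and the elementary fact that on any finite-dimensional space the spectrum of $H_b$ consists exactly of point evaluations.

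The first step is to extract a uniformly bounded net $\{T_\alpha\}$ of finite-rank operators $T_\alpha\colon X\to X$ such that $T_\alpha^*\phi \to \phi$ in norm for every $\phi\in X^*$. The hypothesis that $X^*$ has the BAP gives a bounded net on $X^*$; the point is that it can be arranged to consist of weak-star continuous operators, hence adjoints of finite-rank $T_\alpha\in\mathcal{L}(X,X)$. From $T_\alpha^* \to \mathrm{Id}_{X^*}$ pointwise, together with Cauchy estimates on the Taylor series at $0$, a standard equicontinuity argument for $H_b(X)$ yields $f\circ T_\alpha \to f$ uniformly on each bounded subset of $X$, and in particular on $A$.

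Fix $\alpha$ and write $F_\alpha := T_\alpha(X)$, a finite-dimensional subspace. The composition $f\circ T_\alpha$ factors through $F_\alpha$, so one may define a continuous homomorphism $\psi_\alpha\colon H_b(F_\alpha)\to\mathbb{C}$ by $\psi_\alpha(h) = \varphi(h\circ T_\alpha)$. Because $F_\alpha$ is finite-dimensional, $\mathcal{M}(H_b(F_\alpha))$ consists solely of point evaluations, so $\psi_\alpha = \delta_{y_\alpha}$ for a unique $y_\alpha \in F_\alpha$. Testing against functionals $\phi\in X^*$ (restricted to $F_\alpha$) and using $\pi(\varphi)=z$ gives the identification $y_\alpha = T_\alpha^{**}(z)$; and $\varphi \prec A$ transports to $\psi_\alpha \prec T_\alpha(A)$, forcing $y_\alpha$ to lie in the norm closure of $T_\alpha(A)$ inside $F_\alpha$. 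Hence one can choose $a_\alpha \in A$ with $\|T_\alpha a_\alpha - T_\alpha^{**}(z)\|_{F_\alpha}$ arbitrarily small.

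The conclusion then follows by the simultaneous estimates
\[
|\phi_i(a_\alpha)-\phi_i(z)| \le |(\phi_i - T_\alpha^*\phi_i)(a_\alpha)| + |\phi_i(T_\alpha a_\alpha - T_\alpha^{**}z)| + |(T_\alpha^*\phi_i - \phi_i)(z)|,
\]
\[
|f(a_\alpha) - \varphi(f)| \le |f(a_\alpha) - f(T_\alpha a_\alpha)| + |f(T_\alpha a_\alpha) - f(y_\alpha)| + \|f\circ T_\alpha - f\|_A,
\]
each term of which tends to $0$ along the net by the pointwise convergence $T_\alpha^*\phi_i \to \phi_i$, the uniform boundedness of $A$, the uniform continuity of $f$ on bounded sets, and the convergence $f\circ T_\alpha \to f$ on $A$. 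Choosing $\alpha$ large enough, $a_\alpha \in A \cap V$ with $|f(a_\alpha) - \varphi(f)| < \epsilon$, as required.

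The main obstacle I expect is the very first step: passing from ``$X^*$ has the BAP'' to the existence of finite-rank operators $T_\alpha\colon X\to X$ whose adjoints realise the approximation, i.e., upgrading a BAP net on $X^*$ to one consisting of weak-star continuous (hence dual) operators. Without this upgrade the finite-dimensional factorisation is unavailable and the whole argument collapses; the rest of the proof is then a careful bookkeeping of approximations, but genuinely rests on the simultaneous control of the weak-star topology on $X^{**}$ and of the Taylor expansion of $f$ on the bounded set $A$.
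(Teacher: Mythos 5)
Your overall strategy -- reduce to finite rank operators coming from the bounded approximation property of $X^*$ and use that the spectrum of $H_b$ of a finite-dimensional space consists of evaluations -- is in the right spirit, but the step the argument actually rests on is false. From $T_\alpha^*\phi\to\phi$ you only get $T_\alpha x\to x$ weakly (and, after convexifying, in norm for each fixed $x$), and this does \emph{not} give $f\circ T_\alpha\to f$ uniformly on bounded sets: take $X=\ell_2$, $T_n$ the coordinate projections and $f(x)=\sum_i x_i^2\in H_b(\ell_2)$; then $\|f\circ T_n-f\|_{B_{\ell_2}}=1$ for all $n$. Indeed, if $\|f\circ T_\alpha-f\|_A\to 0$ held for every bounded $A$, then $f$ would be approximable by finite type polynomials and the whole theorem would be trivial (the spectrum would reduce to $X^{**}$); the entire difficulty of the Cluster Value Theorem lives in the functions for which this convergence fails. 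Since $\varphi\prec A$ only controls $\varphi$ through $\|\cdot\|_A$, pointwise or compact convergence is not enough to conclude $\varphi(f\circ T_\alpha)\to\varphi(f)$, so the last display of your proof does not close. A secondary gap: from $\psi_\alpha\prec T_\alpha(A)$ and the finite dimensionality of $F_\alpha$ you may only conclude that $y_\alpha=T_\alpha^{**}z$ lies in the \emph{polynomially convex hull} of $\overline{T_\alpha(A)}$, not in $\overline{T_\alpha(A)}$ itself (think of $A$ mapping onto a circle, with $\psi_\alpha=\delta_0$), so for a general bounded $A$ you cannot pick $a_\alpha\in A$ with $T_\alpha a_\alpha$ close to $y_\alpha$. (By contrast, the BAP upgrade you flag as the main obstacle is a known fact, cited in the paper from Casazza's survey.)

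The paper circumvents both problems with two ideas you would need. First, instead of approximating $f$ by $f\circ T$, it uses the affine map $x\mapsto x+T^{tt}z-Tx$ and proves the \emph{exact} identity $\varphi(f)=\varphi\bigl(x\mapsto f(x+T^{tt}z-Tx)\bigr)$ (Lemma~\ref{igual en funcional}): writing $T=\sum x_k^*\otimes x_k$, one divides $f(x)-f(x+T^{tt}z-Tx)$ by the functions $x_k^*(\cdot)-z(x_k^*)$, which lie in $\ker\varphi$. No uniform approximation of $f$ is needed. Second, since $x\mapsto (I-T)x+T^{tt}z$ maps $rB_X$ into a \emph{larger} ball $RB_X$ intersected with a prescribed weak-star neighborhood of $z$ (Lemma~\ref{lema op finito}), this argument only yields $\hat f(\M_{z,rB_X})\subset Cl_{RB_X}(f,z)$ with $R>r$. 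The paper then invokes Lemma~\ref{CVTT}, an equivalence proved via Arens's joint spectrum theorem and a weak Nullstellensatz, to upgrade this ``enlarged-set'' inclusion to the equality $\hat f(\M_{z,A})=Cl_A(f,z)$ for the original bounded set $A$. Your proposal has no mechanism playing the role of either of these steps, so as written it does not prove the theorem.
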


We need some lemmas to prepare the proof of Theorem \ref{cvt-hb-bap}. The first one shows that a formally weaker version of the Cluster Value Theorem is actually equivalent. This is the main tool that will allow us to show the validity of the Cluster Value Theorem for Banach spaces whose duals have the bounded approximation property. As a bridge between the two versions of the Cluster Value Theorem we have a weak version of the analytic Nullstellensatz theorem for $H_b(U)$.

 \begin{lemma}\label{CVTT}
   Let $X$ be a Banach space and $U\subset X$ an open set. The following are equivalent:
   \begin{itemize}
   \item[$i)$] for each $U$-bounded set $A$, each $z \in X^{**}$ and each $f\in H_b(U)$, there exists a $U$-bounded set $\tilde A$ such that
    $$
    \hat f \big( \mathcal M_{z,A} )\subset Cl_{\tilde A}(f,z).
    $$
   \item[$ii)$] Given  $f_1,\dots,f_n\in H_b(U)$, with $f_j$ approximable by finite type polynomials on $U$-bounded sets for $j=1,\dots,n-1$, such that,
    $$
    \inf_{x\in A}|f_1(x)|+\dots+|f_n(x)|>0\quad\textrm{ for each }U\textrm{-bounded set }A,
    $$
    there exist $g_1,\dots,g_n\in H_b(U)$, such that
    $$
    f_1g_1+\dots+f_ng_n=1.
    $$
          \item[$iii)$] for each $U$-bounded set $A$, each $z \in X^{**}$ and each $f\in H_b(U)$ we have
    $$
    \hat f \big( \mathcal M_{z,A}\big) = Cl_{ A}(f,z).
    $$
 \end{itemize}
 \end{lemma}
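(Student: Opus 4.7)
My plan is to establish the equivalences by closing the cycle $(\textup{iii}) \Rightarrow (\textup{i}) \Rightarrow (\textup{ii}) \Rightarrow (\textup{iii})$. The first link is trivial: taking $\tilde A = A$ in (\textup{iii}) yields (\textup{i}).

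For $(\textup{i}) \Rightarrow (\textup{ii})$ I argue contrapositively. Given $f_1,\dots,f_n$ satisfying the hypothesis but failing to generate the unit ideal, a standard Fr\'echet-algebra argument produces a character $\varphi \in \mathcal{M}(H_b(U))$ with $\varphi(f_j) = 0$ for every $j$. Fix a $U$-bounded $A$ with $\varphi \prec A$ and set $z = \pi(\varphi)$. A Hahn--Banach test against the exponential $g_t = \exp(t(x^\ast - z(x^\ast)))$ (whose seminorm $\|g_t\|_A \to 0$ as $t \to \infty$ if $z \notin \overline{A}^{w^\ast}$, while $\varphi(g_t) = 1$) forces $z \in \overline{A}^{w^\ast}$. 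For each $j < n$, uniform approximability of $f_j$ by finite type polynomials on $U$-bounded sets, combined with the weak-star continuity of such polynomials on bounded subsets of $X^{\ast\ast}$, yields a canonical weak-star continuous extension $\tilde f_j$ on $\overline{A}^{w^\ast}$ satisfying $\varphi(f_j) = \tilde f_j(z) = 0$. Applying $(\textup{i})$ to $f_n$ supplies a $U$-bounded $\tilde A$ and a net $(x_\alpha) \subset \tilde A$ with $x_\alpha \overset{w^\ast}{\to} z$ and $f_n(x_\alpha) \to 0$; repeating the same approximation argument on $\tilde A$ gives $f_j(x_\alpha) \to \tilde f_j(z) = 0$ also for $j < n$. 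Hence $\sum_j |f_j(x_\alpha)| \to 0$ on $\tilde A$, contradicting the hypothesis of $(\textup{ii})$.

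For $(\textup{ii}) \Rightarrow (\textup{iii})$, fix $\varphi \in \mathcal{M}_{z,A}$ and let $\lambda = \varphi(f)$. Since $Cl_A(f,z) \subset \hat f(\mathcal{M}_{z,A})$ by \eqref{vale siempre}, it suffices to show $\lambda \in Cl_A(f,z)$. Assume not: then there are $x^\ast_1,\dots,x^\ast_m \in X^\ast$ and $\delta,\epsilon>0$ such that the basic weak-star neighborhood $V = \{y \in X^{\ast\ast} : |y(x^\ast_j) - z(x^\ast_j)| < \delta\}$ of $z$ satisfies $|f-\lambda| \ge \epsilon$ on $V \cap A$. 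The natural candidates to feed into $(\textup{ii})$ are $L_j := x^\ast_j - z(x^\ast_j)$ for $j = 1,\dots,m$ (affine, hence finite type polynomials) together with $F := f - \lambda$; all these functions vanish under $\varphi$, so a Bezout identity $F g_0 + \sum_j L_j g_j = 1$ delivered by $(\textup{ii})$ would, on evaluating at $\varphi$, yield the contradiction $0 = 1$. On the set $A$ itself, the lower bound $|F| + \sum_j |L_j| \ge \min(\epsilon,\delta)$ is immediate from the dichotomy $x \in V$ versus $x \notin V$.

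The main obstacle is that $(\textup{ii})$ requires this lower bound on \emph{every} $U$-bounded set, while the above family only provides it on the specific $A$. On a $U$-bounded $B$ disjoint from $A$, nothing a priori excludes simultaneous zeros of the $L_j$ at points of $B$ where $f = \lambda$, which would break a naive application of $(\textup{ii})$. Removing these potential common zeros without disturbing the crucial property $L_j, F \in \ker\varphi$ is the delicate step. My approach would be to adjoin to $\{L_j\}$ further affine functionals $x^\ast - z(x^\ast)$ chosen from $X^\ast$ using the fiber data $\pi(\varphi) = z$ and separation arguments for the common-zero locus of $F$, and possibly to pre-multiply $F$ by a carefully chosen element of $\ker\varphi$, so that the enlarged family has empty common-zero set on $U$; a continuity/compactness argument should then promote pointwise positivity to the required uniform positivity on each $U$-bounded set. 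Executing this augmentation step is, in my view, the heart of the proof.
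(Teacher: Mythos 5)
Your treatment of $iii)\Rightarrow i)$ and of $i)\Rightarrow ii)$ is correct and coincides with the paper's argument: the paper likewise invokes Arens' description of the joint spectrum of $f_1,\dots,f_n$ in the Fr\'echet algebra $H_b(U)$ to produce a character $\varphi$ annihilating all the $f_j$, and then combines $i)$ with the approximability of $f_1,\dots,f_{n-1}$ (so that $f_j(x_\alpha)\to\hat f_j(\varphi)=0$ along the net furnished by $Cl_{\tilde A}(f_n,z)$) to violate the positivity hypothesis on $\tilde A$. One small remark: your exponential/Hahn--Banach test only places $z$ in the weak-star closed \emph{convex} hull of $A$, not in $\overline{A}^{w^*}$; but this step is dispensable, since the nonemptiness of $Cl_{\tilde A}(f_n,z)$ already forces $z\in\overline{\tilde A}^{w^*}$.

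The genuine gap is in $ii)\Rightarrow iii)$, and you have located it yourself. After reducing to $0\notin Cl_A(f,z)$ and producing the family $x_1^*-z(x_1^*),\dots,x_{n-1}^*-z(x_{n-1}^*),\,f$, which vanishes under $\varphi$ and satisfies $\inf_A\big(|f_1|+\dots+|f_n|\big)>0$, you stop, on the grounds that $ii)$ formally requires the lower bound on \emph{every} $U$-bounded set. The paper's proof consists of exactly this construction followed by a direct appeal to $ii)$, with the lower bound verified only on $A$ (the set to which $\varphi$ is subordinated); no augmentation of the family is performed. Your proposed repair --- adjoining further affine functionals from $\ker\varphi$ and pre-multiplying $f$ by elements of $\ker\varphi$ so that the enlarged family has empty common zero set on $U$, then upgrading to uniform positivity --- is not carried out, and it is doubtful that it can be in this form: finitely many functionals $x^*-z(x^*)$ only cut out a finite-codimensional affine subspace, on which you would still need $|f|$ bounded below on every bounded set, which is essentially the difficulty you started with. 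As written, the implication $ii)\Rightarrow iii)$ --- precisely the step through which the lemma feeds into the proof of Theorem~\ref{cvt-hb-bap} --- is not established in your proposal.
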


\begin{proof}

$i) \Longrightarrow ii)$
Let $f_1,\dots,f_n$ be as in $ii)$. First we see that $\hat{f_1}, \dots, \hat{f_n}$ do not have a common zero in $\M(H_b(U))$. Indeed, suppose there exists $\varphi \in \M(H_b(U))$  common zero of $\hat{f_1}, \dots, \hat{f_n}$. Let $A$ be a $U$-bounded set such that $\varphi \prec A$ and set $z:=\pi(\varphi)$ (thus, $\varphi \in \mathcal M_{z,A}$). By $i)$, there exists a $U$-bounded set $\tilde A$ such that $\varphi(f_n) \in Cl_{\tilde A}(f_n,z)$. In other words, there is a net $(x_\alpha)  \in \tilde A$ such that $x_\alpha\overset{\sigma(X^{**},X^*)}{\longrightarrow}z$ and $f_n(x_\alpha) \to \varphi(f_n)=0$.
Note that $0 = \hat f_j (\varphi) = f_j(z) = \lim f_j(x_\alpha)$ for every $j=1, \dots, n-1$ (since each $f_j$ is approximable).
Therefore,
$$\inf\{|f_1(x)|+\dots+|f_n(x)|\,:\, x\in \tilde A\}=0,$$
which is a contradiction.

Now, consider $\sigma(f_1,\dots,f_n)$  the joint spectrum of $f_1, \dots, f_n$, i.e., the set of all $(\lambda_1, \dots, \lambda_n) \in \mathbb C$ such that the elements $f_1 - \lambda_1, \dots, f_n - \lambda_n$ generate a proper ideal in $H_b(U)$. A theorem by  Arens (see \cite{arens1958dense} and also \cite[Proposition 32.13]{Muj86}) gives
$$
\sigma(f_1, \dots, f_n) = \{ \big(\varphi(f_1), \dots, \varphi(f_n) \big) : \varphi \in \M(H_b(U)) \}.
$$

We have just proved that $\hat f_1, \dots, \hat f_n$ do not have a common zero in $\M(H_b(U))$, thus $(0, \dots, 0)$ does not belong to $ \sigma(f_1, \dots, f_n)$. This proves the existence of $g_1,\dots,g_n\in H_b(U)$, such that $ f_1g_1+\dots+f_ng_n=1$.

$ii) \Longrightarrow iii)$
Suppose $iii)$ does not hold and take  a $U$-bounded set $A$, $z \in U$, $f\in H_b(U)$ and $\varphi \in  \mathcal M_{z,A}$ such that $\varphi(f)$ does not belong to $Cl_{ A}(f,z)$.
By changing $f$ by $f - \varphi(f)$ we can suppose, without loss of generality, that $0$ is not in  $Cl_{ A}(f,z)$. Then there exist $\varepsilon> 0$ and a $\sigma(X^{**},X^*)$-neighborhood $V= \{x \in A : \vert x_i^{*}(x) - z(x_i^{*}) \vert < \delta \mbox{ for } i=1, \dots, n-1 \} $ of $z$ such that $\vert f \vert > \varepsilon$ on $V\cap A$.
Define, for each $j=1, \dots, n-1$, $f_j := x_j^{*} - z(x_j^{*})$ (which is obviously approximable), then $\sum_{j=1}^{n-1} \vert f_j \vert > \delta$ on $A \setminus  V$. If $f_n : = f$, we have
$$
    \inf\{|f_1(x)|+\dots+|f_n(x)|\,:\, x\in A\}>0.
    $$
But $\hat f_1, \dots, \hat f_n$ vanish at $\varphi$, which is a contradiction.

$iii) \Longrightarrow i)$ is immediate.
\end{proof}

\begin{lemma} \label{igual en funcional}
 Let $T$ be a finite rank operator on $X$ and $z\in X^{**}$. Then for every $\varphi\in \M(H_b(X))$ with $\pi(\varphi)=z$ and $f\in H_b(X)$ we have
 $$
 \varphi(f)=\varphi(x\mapsto[f(x+T^{tt}z-Tx)]).
 $$
\end{lemma}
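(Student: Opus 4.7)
The plan is to use the finite rank of $T$ to replace the perturbation $T^{tt}z-Tx$ by a finite collection of elements of $H_b(X)$ that vanish under $\varphi$, and then expand $f$ in a multivariable Taylor series in those directions. Writing $Tx=\sum_{j=1}^n x_j^*(x)\,y_j$ with $x_j^*\in X^*$ and $y_j\in X$, one has $T^{tt}z=\sum_{j=1}^n z(x_j^*)\,y_j$, so that $T^{tt}z-Tx=\sum_{j=1}^n h_j(x)\,y_j$ with $h_j(x):=z(x_j^*)-x_j^*(x)\in H_b(X)$. The key observation, using $\pi(\varphi)=z$, is that
$$\varphi(h_j)=z(x_j^*)-\varphi(x_j^*)=z(x_j^*)-\pi(\varphi)(x_j^*)=0$$
for every $j$.

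Next, I would introduce the auxiliary function $F:\C^n\times X\to\C$ given by $F(\lambda,x)=f\bigl(x+\sum_j\lambda_j y_j\bigr)$, which is holomorphic and bounded on products of bounded sets, and expand it as a Taylor series in $\lambda$ around the origin:
$$F(\lambda,x)=\sum_{\alpha\in\N^n}\lambda^\alpha\, a_\alpha(x),\qquad a_\alpha\in H_b(X),\quad a_0=f.$$
Setting $\lambda_j=h_j(x)$ yields the pointwise identity
$$g(x):=f(x+T^{tt}z-Tx)=\sum_{\alpha\in\N^n}h_1(x)^{\alpha_1}\cdots h_n(x)^{\alpha_n}\,a_\alpha(x).$$
Once this series is shown to converge in $H_b(X)$, continuity and multiplicativity of $\varphi$ give
$$\varphi(g)=\sum_\alpha \varphi(h_1)^{\alpha_1}\cdots\varphi(h_n)^{\alpha_n}\,\varphi(a_\alpha)=\varphi(a_0)=\varphi(f),$$
because only the $\alpha=0$ summand survives.

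The step I expect to be the most delicate is justifying convergence of that series in the Fr\'echet topology of $H_b(X)$. For a fixed bounded set $B\subset X$, each $h_j$ is bounded on $B$ by some constant $M$; then multivariable Cauchy estimates applied to $F(\,\cdot\,,x)$ on the polydisk $\{|\lambda_j|\le R\}$ with $R>M$ give
$$\|a_\alpha\|_{B}\le R^{-|\alpha|}\,\|f\|_{B+\{\sum_j\lambda_j y_j:\,|\lambda_j|\le R\}},$$
the enlargement being a bounded set in $X$. Consequently $\bigl\|h_1^{\alpha_1}\cdots h_n^{\alpha_n}a_\alpha\bigr\|_B \le C_R\,(M/R)^{|\alpha|}$, which is summable over $\alpha\in\N^n$. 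This provides absolute convergence uniformly on every bounded set, hence convergence in $H_b(X)$, which is precisely what is needed to exchange $\varphi$ with the infinite sum. After that exchange, the vanishing $\varphi(h_j)=0$ collapses the series to $\varphi(f)$, giving the claimed identity.
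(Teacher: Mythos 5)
Your proof is correct, but it takes a different route from the paper's. The paper invokes \cite[Lemma 1.5]{AroCarGamLasMae12} (applied once per rank-one summand of $T=\sum_{k=1}^N x_k^*\otimes x_k$) to get an \emph{exact finite} identity
\[
f(x)=f(x+T^{tt}z-Tx)+\sum_{k=1}^N\bigl(x_k^*(x)-z(x_k^*)\bigr)g_k(x),\qquad g_k\in H_b(X),
\]
i.e.\ a division-with-remainder in each perturbation direction; applying $\varphi$ then kills each summand because $\varphi\bigl(x_k^*-z(x_k^*)\bigr)=0$, with no convergence issue at all. You reach the same conclusion by expanding $f$ in a full multivariable Taylor series in the $n$ directions $y_j$, substituting $\lambda_j=h_j(x)$, and passing $\varphi$ through the resulting infinite series; the key identity $\varphi(h_j)=z(x_j^*)-\pi(\varphi)(x_j^*)=0$ is the same in both arguments. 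Your version is self-contained (it needs only Cauchy estimates and the continuity and multiplicativity of $\varphi$ on the Fr\'echet algebra $H_b(X)$), at the price of the convergence argument, which you handle correctly: the coefficient bound $\|a_\alpha\|_B\le R^{-|\alpha|}\|f\|_{B'}$ with $B'$ bounded, together with $R>\max_j\|h_j\|_B$, gives absolute convergence uniformly on each bounded set, which is exactly what is needed to interchange $\varphi$ with the sum. The paper's version is shorter but outsources the real work to the cited division lemma, which is itself proved by an integral-formula argument of the same flavor as your Taylor expansion.
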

\begin{proof}
 Suppose $T=\sum_{k=1}^Nx_k^{*}\otimes x_k$. A repeated application of \cite[Lemma 1.5]{AroCarGamLasMae12} (note that, using the notation of \cite[Lemma 1.5]{AroCarGamLasMae12}, the hypothesis $x^*(e)=1$ is not needed for its proof) shows that there exist $g_k\in H_b(X)$, $k=1,\dots,N$, such that
 $$
 f(x)=f(x+T^{tt}z-Tx) + \sum_{k=1}^N(x_k^{*}(x)-z(x_k^{*}))g_k(x).
 $$
 The result follows because $\varphi(x\mapsto[x_k^{*}(x)-z(x_k^{*})])=0$.
\end{proof}

 \begin{lemma}\label{lema op finito}
  Let $X$ be a Banach space  whose dual  has the bounded approximation property. Let $z\in X^{**}$. For each $r>0$ there exists $R>0$ such that for each $\sigma(X^{**}, X^{*})$-neighborhood  $V$ of $z$, there is a finite rank operator $T$ on $X$ such that
  $$
  x+T^{tt}(z)-Tx \in V\cap RB_X \quad \textrm{ for every }x\in rB_X.
  $$
   \end{lemma}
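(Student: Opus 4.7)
The plan is to combine the duality characterization of the bounded approximation property of $X^{*}$ with a short linearity computation. Recall that by a classical theorem (essentially due to Grothendieck, see for instance Lindenstrauss--Tzafriri), $X^{*}$ having BAP is equivalent to the existence of a constant $\lambda\geq 1$ and a net $(T_{\alpha})$ of finite rank operators on $X$ satisfying $\|T_{\alpha}\|\leq \lambda$ and $T_{\alpha}^{*}x^{*}\to x^{*}$ in norm for every $x^{*}\in X^{*}$. I would fix such a net at the outset. Since each $T$ in the net has finite rank, $T^{tt}z$ automatically lies in $X$ (indeed in the finite-dimensional subspace $T(X)$), so the assertion $x+T^{tt}z-Tx\in X$ makes sense.

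For the norm bound, I would set $R:=r(1+\lambda)+\lambda\|z\|$, a constant depending only on $r$, $\lambda$ and $\|z\|$. For any $T$ in the net and any $x\in rB_{X}$, the triangle inequality together with $\|T^{tt}\|=\|T\|\leq\lambda$ gives
$$
\|x+T^{tt}z-Tx\|\leq \|x\|+\|T\|\,\|z\|+\|T\|\,\|x\|\leq r+\lambda\|z\|+\lambda r = R,
$$
so the inclusion in $RB_{X}$ holds uniformly in $V$ and in the choice of $T$ from the net.

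For the weak-star part, a generic basic $\sigma(X^{**},X^{*})$-neighborhood of $z$ has the form $V=\{y\in X^{**}: |\langle y-z,x_i^{*}\rangle|<\varepsilon,\ i=1,\dots,n\}$ for some $x_{1}^{*},\dots,x_{n}^{*}\in X^{*}$ and $\varepsilon>0$. Using the two identities $\langle T^{tt}z,x_i^{*}\rangle=z(T^{*}x_i^{*})$ and $\langle Tx,x_i^{*}\rangle=\langle x,T^{*}x_i^{*}\rangle$, the key computation is
$$
\langle x+T^{tt}z-Tx-z,\,x_i^{*}\rangle=\langle x,\,x_i^{*}-T^{*}x_i^{*}\rangle-z(x_i^{*}-T^{*}x_i^{*}),
$$
whose modulus is at most $(r+\|z\|)\,\|x_i^{*}-T^{*}x_i^{*}\|$ for every $x\in rB_{X}$. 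Since $T_{\alpha}^{*}x_i^{*}\to x_i^{*}$ in norm for each $i$, choosing $T$ sufficiently far in the net so that $\|T^{*}x_i^{*}-x_i^{*}\|<\varepsilon/(r+\|z\|)$ for every $i=1,\dots,n$ yields $x+T^{tt}z-Tx\in V$ for all $x\in rB_{X}$.

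The only genuinely non-trivial ingredient is the dual BAP characterization providing the net $(T_{\alpha})$ on $X$ (rather than on $X^{*}$); this is where the hypothesis on $X^{*}$ is used, and it is the main obstacle in the sense that the lemma would fail if one only had BAP on $X$ itself. Once that net is at hand, the rest is a single linearity manipulation.
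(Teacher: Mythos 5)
Your proof is correct and follows essentially the same route as the paper: the same duality characterization of the BAP of $X^{*}$ via a bounded net $(T_{\alpha})$ of finite rank operators on $X$ with $T_{\alpha}^{*}x^{*}\to x^{*}$, the same constant $R=\lambda\|z\|+(\lambda+1)r$, and the same choice of $T$ far enough along the net; you merely write out explicitly the verification that the paper leaves as ``easy to check.''
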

\begin{proof}
If $X^{*}$ has the $\lambda$-approximation property, then there is a net $(T_\alpha)_{\alpha\in \Lambda	}$ of finite rank operators on $X$ such that: $\|T_\alpha\|\le \lambda$ for every $\alpha\in\Lambda$ and $T_\alpha^{*}x^{*}\to x^{*}$ for every $x^{*}\in X^{*}$ (see, for example, \cite[Propostion 3.5]{casazza2001approximation}).

Let $R:=\lambda\|z\|+(\lambda+1)r$. We may assume that $V=\{w\in X^{**}:\, |w(x_j^{*})-z(x_j^{*})|<\varepsilon,\, j=1,\dots,N\}.$ Let $\alpha\in\Lambda$ such that $\|T_\alpha^{*}(x_j^{*})-x_j^{*}\|<\frac{\varepsilon}{2}\min\{\frac1{r},\frac1{\|z\|}\}$. Then if $x\in rB_X$, it is easy to check that $x+T^{tt}(z)-Tx\in RB_X\cap V$.

\end{proof}

\medskip

\begin{proof}[Proof of Theorem~\ref{cvt-hb-bap}.]
Let $r>0$ and let $R>0$ be as in Lemma \ref{lema op finito}.
Using Lemma \ref{CVTT} it is enough to prove that if $0 \notin Cl_{RB_X}(f,z)$ then $0 \notin \hat f (\M_{z,{rB_X}})$.

Since $0 \notin Cl_{RB_X}(f,z)$ there is a $\sigma(X^{**}, X^{*})$-neighborhood  $V$ of $z$ and $\delta > 0$ such that $\vert f \vert > \delta$ on $V\cap RB_X$.
By Lemma \ref{lema op finito} there is a finite rank operator $T$ on $X$ such that
$ x+T^{tt}(z)-Tx \in V\cap RB_X$ for every $x\in rB_X$.
Then, the function $g(x):=f(x+T^{tt}z-Tx)$ satisfies $\vert g \vert > \delta$ on $rB_X$ and therefore it is invertible in $A_u(rB_X)$. Thus, $\varphi(g) \neq 0$ for every $\varphi \in\mathcal M(A_u(rB_X))$.
Now, note that $\M_{z,{rB_X}} \subset \M(A_u(rB_X))$. By Lemma \ref{igual en funcional} we have
$\varphi(f)=\varphi(g) \neq 0,$
and thus $0 \notin \hat f (\M_{z,{rB_X}})$.
\end{proof}

\bigskip

The following theorem shows the connection between the Cluster Value Theorem for  $H_b(X)$ and $A_u(B_X)$.

\begin{theorem}\label{equivalencia CVT}
Let $X$ be a Banach space. The following statements are equivalent:
\begin{itemize}
 \item[$i)$] For every bounded set $A\subset X$, every $z \in X^{**}$ and every $f\in H_b(X)$ we have
    $
    \hat f \big( \mathcal M_{z,A}(H_b(X))\big) = Cl_{ A}(f,z).
    $
 \item[$ii)$] For every  $z \in \overline{B_{X^{**}}}$ and every $f\in A_u(B_X)$ we have
    $
    \hat f \big( \mathcal M_{z}(A_u(B_X))\big) = Cl(f,z).
    $
\item[$iii)$] For every $z \in B_{X^{**}}$, every bounded set $A\subset B_X$ and every $f\in H_b(B_X)$ we have
    $
    \hat f \big( \mathcal M_{z,A}(H_b(B_{X}))\big) = Cl_{ A}(f,z).
    $
    \end{itemize}
\end{theorem}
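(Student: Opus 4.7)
The plan is to prove the cyclic chain $(i) \Rightarrow (ii) \Rightarrow (iii) \Rightarrow (i)$; in every case the inclusion $Cl \subset \hat f(\M_z)$ is automatic from \eqref{vale siempre}, so only the reverse inclusion requires work. I rely throughout on two basic observations: any $g \in H_b(X)$ restricts to an element of $A_u(B_X)$ (its derivatives are bounded on $B_X$ by Cauchy's estimates, so $g$ is Lipschitz there), and the continuous polynomials on $X$ form a common dense subalgebra of $H_b(X)$, $A_u(B_X)$ and $H_b(B_X)$ in the appropriate topologies.

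For $(i) \Rightarrow (ii)$: given $\varphi \in \M_z(A_u(B_X))$ with $z \in \overline{B_{X^{**}}}$, the formula $\tilde\varphi(g) := \varphi(g|_{B_X})$ defines an element of $\M_{z,B_X}(H_b(X))$. For $f \in A_u(B_X)$, I would approximate $f$ uniformly on $B_X$ by continuous polynomials $p_n \in H_b(X)$; applying $(i)$ to each $p_n$ produces, inside any prescribed weak-star neighbourhood of $z$, points $x \in B_X$ on which $p_n(x)$ is close to $\tilde\varphi(p_n) = \varphi(p_n|_{B_X})$, and a three-$\varepsilon$ argument delivers $\varphi(f) \in Cl(f, z)$.

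For $(ii) \Rightarrow (iii)$: given $\varphi \in \M_{z,A}(H_b(B_X))$ with $A \subset rB_X$ for some $r < 1$, I would consider the continuous algebra homomorphism $\rho \colon H_b(B_X) \to A_u(B_X)$, $\rho(f)(y) := f(ry)$. Its image contains every polynomial (hence is dense), and the chain $|\varphi(f)| \leq \|f\|_A \leq \|f\|_{rB_X} = \|\rho(f)\|_{B_X}$ forces the factorization $\varphi = \tilde\varphi \circ \rho$ for some $\tilde\varphi \in \M_{z/r}(A_u(B_X))$ (note that $\|z\| \leq r$, so $z/r \in \overline{B_{X^{**}}}$). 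Applying $(ii)$ to $\tilde\varphi$ and $\rho(f)$ yields a net $(y_\alpha) \subset B_X$ with $y_\alpha \to z/r$ weak-star and $f(ry_\alpha) \to \varphi(f)$, giving $\varphi(f) \in Cl_{rB_X}(f, z)$. This is exactly condition $i)$ of Lemma \ref{CVTT} applied to $U = B_X$ with $\tilde A := rB_X$, so invoking that lemma upgrades the conclusion to the full equality $\hat f(\M_{z,A}) = Cl_A(f, z)$ required by $(iii)$.

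For $(iii) \Rightarrow (i)$: given $\varphi \in \M_{z,A}(H_b(X))$ with bounded $A \subset rB_X$, I would use the dilation $\Phi \colon H_b(X) \to H_b(B_X)$, $\Phi(f)(y) := f(2ry)$, which is an injective algebra homomorphism with dense image (again containing all polynomials). The identity $\|\Phi(f)\|_{A/(2r)} = \|f\|_A$ combined with $\varphi \prec A$ produces a factorization $\varphi = \tilde\varphi \circ \Phi$ with $\tilde\varphi \in \M_{z/(2r),\, A/(2r)}(H_b(B_X))$, where $z/(2r) \in B_{X^{**}}$ and $A/(2r) \subset \tfrac{1}{2} B_X$ is $B_X$-bounded. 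Applying $(iii)$ produces a net $(y_\alpha) \subset A/(2r)$ with $y_\alpha \to z/(2r)$ weak-star and $f(2ry_\alpha) \to \varphi(f)$; setting $x_\alpha := 2r y_\alpha \in A$ yields $\varphi(f) \in Cl_A(f, z)$. I expect the step $(ii) \Rightarrow (iii)$ to be the main technical hurdle, since the factorization through $A_u(B_X)$ naturally produces cluster membership only over the ambient set $rB_X$, and one must route through the weak Nullstellensatz form in Lemma \ref{CVTT} to refine this to the prescribed set $A$.
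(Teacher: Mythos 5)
Your proposal is correct and follows essentially the same route as the paper: the same cyclic chain $i)\Rightarrow ii)\Rightarrow iii)\Rightarrow i)$, using density of polynomials plus a three-$\varepsilon$ argument for $i)\Rightarrow ii)$, and dilation/rescaling arguments combined with Lemma~\ref{CVTT} for the other two implications. Your explicit factorizations $\varphi=\tilde\varphi\circ\rho$ and $\varphi=\tilde\varphi\circ\Phi$ merely spell out what the paper dispatches with ``it is easy to check that the Cluster Value Theorem for $A_u(B_X)$ (resp.\ $H_b(B_X)$) passes to $A_u(rB_X)$ (resp.\ $H_b(rB_X)$)'', and your version of $iii)\Rightarrow i)$ even lands directly in $Cl_A$ without a second appeal to Lemma~\ref{CVTT}.
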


\begin{proof}
$i) \Rightarrow ii)$: Let   $f\in A_u(B_X)$ and $\varphi \in \mathcal M\big(A_u(B_X)\big) \subset \mathcal M(H_b(X))$ such that $\pi(\varphi)=z$.
Fix $\varepsilon >0$ and $V$ a $\sigma(X^{**}, X^{*})$-neighborhood of $z$. To prove that $\hat f \big( \mathcal M_{z}( A_u(B_X))\big) \subset Cl(f,z)$, it is enough to show that there is $x \in V \cap B_X$ such that $\vert \varphi(f) - f(x) \vert < \varepsilon$. Thus,. Indeed, take $g \in H_b(X)$ such that
$\Vert f - g \Vert_{B_X} < \frac{\varepsilon}{3}$. By $i)$  we have
$$\hat g \big( \mathcal M_{z,B_X}\big) = Cl_{B_X}(g,z).$$ Then, there is $x \in V \cap B_X$ such that
$\vert \varphi(g) - g(x) \vert < \frac{\varepsilon}{3}$.
Now, since $\varphi \prec B_X$, we get
\begin{align*}
\vert \varphi(f) - f(x) \vert  & \leq \vert \varphi(f) - \varphi(g)  \vert + \vert \varphi(g) - g(x) \vert + \vert g(x) - f(x) \vert \\
& \leq \Vert f - g \Vert_{B_X} + \vert \varphi(g) - g(x) \vert + \Vert f - g \Vert_{B_X} \\
& \leq \varepsilon.
\end{align*}


$ii) \Rightarrow iii)$ Let $f\in H_b(B_X)$ and $\varphi \in M_{z,A}(H_b(B_X))$. We choose $r<1$ such that $A\subset rB_X$. Thus $f\in A_u(rB_X)$ and $\varphi\in \M_z(A_u(rB_X))$. It is easy to check  that if the Cluster Value Theorem holds for $A_u(B_X)$ then it also holds for $A_u(rB_X)$. Then, by $ii)$, there is a net $(x_{\alpha})_{\alpha \in \Lambda} \subset rB_X$ such that  $x_\alpha\overset{\sigma(X^{**},X^*)}{\longrightarrow} z$ and $f(x_\alpha) \to \varphi(f)$.
Therefore
$$
    \hat f \big( \mathcal M_{z,A}(H_b(B_{X})) )\subset Cl_{rB_X}(f,z).
$$
Now the result follows from Lemma \ref{CVTT}.

$iii) \Rightarrow i)$ Let $\varphi \in \M(H_b(X))$ and $f \in H_b(X)$ such that $\pi(\varphi)=z$ and $\varphi \prec A$ with  $A \subset rB_X$ for some $r>0$. Then $\varphi$ is continuous in the topology of $H_b(rB_X)$ and thus we can consider $\varphi \in \mathcal{ M}(H_b(rB_X))$ and $f \in H_b(rB_X)$. Again, if $iii)$ holds, then the Cluster Value Theorem holds for $H_b(rB_X)$.  Then there is a net $(x_{\alpha})_{\alpha \in \Lambda} \subset rB_X$ such that  $x_\alpha\overset{\sigma(X^{**},X^*)}{\longrightarrow} z$ and $f(x_\alpha) \to \varphi(f)$.
Therefore
$$
    \hat f \big( \mathcal M_{z,A}(H_b(X)) )\subset Cl_{rB_X}(f,z).
    $$
And again using Lemma \ref{CVTT}, the result follows.
\end{proof}

Recall that a set $U$ is balanced whenever $\lambda \cdot U \subset U$ for all $\vert \lambda \vert \leq 1$.
Since the entire functions of bounded type are dense in $H_b(U)$ for every balanced open set $U$ (see \cite[Theorem 7.11]{Muj86}), proceeding  as in the previous proof we also have the following.
 \begin{theorem}\label{CVT_HbU}
   The Cluster Value Theorem holds for $H_b(X)$ if and only if it holds for $H_b(U)$ for every (or some) balanced open set $U\subset X$.
 \end{theorem}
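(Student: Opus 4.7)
The plan is to follow the template of Theorem~\ref{equivalencia CVT}, splitting the statement into two nontrivial implications: (a) the Cluster Value Theorem for $H_b(X)$ implies the Cluster Value Theorem for $H_b(U)$ for every balanced open $U\subset X$, and (b) the Cluster Value Theorem for $H_b(U)$ for some balanced open $U\subset X$ implies the Cluster Value Theorem for $H_b(X)$. Combined with the trivial observation that $X$ itself is balanced and open, these two implications cycle through to give all three equivalent conditions in the statement.

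For direction (a), I would fix a balanced open $U\subset X$, a $U$-bounded set $A$, $z\in X^{**}$, $f\in H_b(U)$ and $\varphi\in\M_{z,A}(H_b(U))$. Since $U$-bounded sets are bounded in $X$ and the restriction map $H_b(X)\hookrightarrow H_b(U)$ is continuous, the restriction $\varphi|_{H_b(X)}$ lies in $\M_{z,A}(H_b(X))$. Using \cite[Theorem 7.11]{Muj86} I would pick $g_n\in H_b(X)$ with $\|f-g_n\|_A\to 0$, apply the hypothesis to each $g_n$ to obtain $\varphi(g_n)\in Cl_A(g_n,z)$, and close with exactly the triangle-inequality argument that appears in step $i)\Rightarrow ii)$ of Theorem~\ref{equivalencia CVT}: for every $\sigma(X^{**},X^*)$-neighborhood $V$ of $z$ and every $\varepsilon>0$, one finds $x\in V\cap A$ with $|f(x)-\varphi(f)|<\varepsilon$, so $\varphi(f)\in Cl_A(f,z)$.

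For direction (b), I would first verify a scaling lemma: for each $\mu>0$ the topological algebra isomorphism $\Phi\colon H_b(\mu U)\to H_b(U)$, $\Phi(g)(x)=g(\mu x)$, carries $\M_{z,A}(H_b(U))$ onto $\M_{\mu z,\mu A}(H_b(\mu U))$ and $Cl_A(\Phi g,z)$ onto $Cl_{\mu A}(g,\mu z)$, so the Cluster Value Theorem for $H_b(U)$ and for $H_b(\mu U)$ are equivalent. Because $U$ is balanced and hence absorbing, by passing to a sufficiently large dilate I may assume that any prescribed bounded set $A\subset X$ is $U$-bounded. Given $f\in H_b(X)$ and $\varphi\in\M_{z,A}(H_b(X))$, the restriction map $r\colon H_b(X)\to H_b(U)$ is injective (by the identity principle for entire functions) and has dense image by \cite[Theorem 7.11]{Muj86}. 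The bound $|\varphi(g)|\le\|g\|_A=\|r(g)\|_A$ together with the continuity of $\|\cdot\|_A$ as a seminorm on $H_b(U)$ shows that $r(g)\mapsto\varphi(g)$ extends by density to a continuous homomorphism $\tilde\varphi\in\M_{z,A}(H_b(U))$. Applying the hypothesis to $f|_U$ and $\tilde\varphi$ then yields $\varphi(f)=\tilde\varphi(f|_U)\in Cl_A(f|_U,z)=Cl_A(f,z)$.

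The two essential inputs are the density of entire functions in $H_b(U)$ for balanced $U$ and the scaling invariance under dilations of $U$. Given these, the only delicate point is keeping track, in direction (b), of both the fiber over $z$ and the $A$-fiber condition $\varphi\prec A$ when passing from $H_b(X)$ to $H_b(U)$; this is ensured by the compatibility of the seminorm $\|\cdot\|_A$ on the two algebras. The rest is bookkeeping parallel to the proof of Theorem~\ref{equivalencia CVT}.
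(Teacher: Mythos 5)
Your proposal is correct and follows essentially the route the paper intends: the paper's proof of Theorem~\ref{CVT_HbU} is just the remark that one proceeds as in Theorem~\ref{equivalencia CVT} using the density of entire functions of bounded type in $H_b(U)$ for balanced $U$ (\cite[Theorem 7.11]{Muj86}), and your two directions reproduce exactly the approximation/triangle-inequality step of $i)\Rightarrow ii)$ and the extension-by-density plus dilation step of $iii)\Rightarrow i)$, with the absorbing property of balanced sets supplying the needed scaling.
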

 Clearly, for the \emph{if} part of the theorem we do need $U$ to be balanced.

\section{Applications of the Cluster Value Theorem}\label{aplicaciones}

The Cluster Value Theorem  gives information about the structure of the spectrum of the algebras in which it holds. In this section we present some results and consequences in this setting.

\subsection{Fibers of $\boldsymbol{\M(A_u(B_X))}$}

We apply our results to describe the fibers of $\M(A_u(B_X))$ over points of the sphere. We know that for every $x\in \overline B_X$, the evaluation $\delta_x$ belongs to the fiber $\M_x(A_u(B_X))$ over $x$. When there is no other element, we say that the fiber is trivial.
Recall that a point $x\in \overline B_X$ is a \emph{point of continuity}  if any net $(x_\alpha)\subset \overline B_X$ weakly converging to $x$ also converges in norm.
We denote by $S_X$ the unit sphere of $X$ (i.e., the set of unit norm vectors in $X$).

\begin{proposition}\label{PC}
 Let $X$ be a Banach space whose dual has the bounded approximation property. If $x\in S_X$ is a point of continuity, then the fiber of $\mathcal M(A_u(B_X))$ over $x$ is trivial.
\end{proposition}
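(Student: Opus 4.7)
The plan is to combine Theorem~\ref{main1} with the definition of a point of continuity and the uniform continuity of functions in $A_u(B_X)$. The heart of the argument is that both relevant notions of convergence collapse at $x$: cluster-set convergence forces a weakly convergent net in $B_X$, which the point-of-continuity hypothesis upgrades to norm convergence, after which uniform continuity pins down the value of $f$.

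More concretely, fix $\varphi \in \mathcal M_x(A_u(B_X))$ and $f \in A_u(B_X)$. Since $X^*$ has the bounded approximation property, Theorem~\ref{main1} applies and yields
$$\varphi(f) \in Cl(f,x) = \hat f\bigl(\mathcal M_x(A_u(B_X))\bigr).$$
So there exists a net $(x_\alpha) \subset B_X$ with $x_\alpha \to x$ in $\sigma(X^{**}, X^{*})$ and $f(x_\alpha) \to \varphi(f)$. Because $x \in X$ and the $x_\alpha$ lie in $X$, the convergence $x_\alpha \to x$ in $\sigma(X^{**},X^{*})$ is the same as $x_\alpha \to x$ weakly in $X$. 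The hypothesis that $x$ is a point of continuity then upgrades this to norm convergence $\|x_\alpha - x\| \to 0$. Finally, $f$ is uniformly continuous on $B_X$ (and extends continuously to $\overline{B_X}$), so $f(x_\alpha) \to f(x)$, whence $\varphi(f) = f(x)$.

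Since $f \in A_u(B_X)$ was arbitrary, $\varphi = \delta_x$, which is exactly the statement that the fiber $\mathcal M_x(A_u(B_X))$ is trivial. To conclude that $Cl(f,x)$ is nonempty in the first place (so that the argument applied to a specific $\varphi$ makes sense), one can simply observe that the net $x_n = (1 - 1/n)x \in B_X$ converges in norm to $x$ and, by uniform continuity, $f(x_n) \to f(x)$, so $f(x) \in Cl(f,x)$; in particular the calculation above shows $Cl(f,x) = \{f(x)\}$.

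There is no real obstacle: the Cluster Value Theorem does all the work, and the only thing to check is that the point-of-continuity property combined with uniform continuity forces the cluster set at $x$ to be a singleton. The argument would break down without the bounded approximation property of $X^{*}$ (since then Theorem~\ref{main1} is unavailable) or without uniform continuity (so it need not extend verbatim to $H^\infty(B_X)$).
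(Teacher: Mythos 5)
Your proof is correct and follows essentially the same route as the paper: apply Theorem~\ref{main1} to realize $\varphi(f)$ as a limit $\lim f(x_\alpha)$ along a net in $B_X$ converging weakly to $x$, use the point-of-continuity hypothesis to upgrade this to norm convergence, and conclude $\varphi(f)=f(x)$ by (uniform) continuity of $f$. The extra observations you include (that weak-star convergence of a net in $X$ to a point of $X$ is just weak convergence, and that $Cl(f,x)$ is nonempty) are correct and harmless, though the paper leaves them implicit.
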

\begin{proof}
 Let $\varphi\in\mathcal M_{x}(A_u(B_X))$ and $f\in  A_u(B_X)$. By the Cluster Value Theorem~\ref{main1}, there is a net $(x_\alpha)\subset \overline B_X$ weakly converging to $x$ such that $\varphi(f)=\lim f(x_\alpha)$. Since $x$ is a point of continuity, $(x_\alpha)$ converges in norm to $x$ and therefore $\varphi(f)=\lim f(x_\alpha)=f(x)$.
\end{proof}

Recall that a Banach space has the \emph{ Kadec property } if every point in the sphere $S_X$ is point of continuity.
\begin{corollary}
 Let $X$ be a Banach space with the Kadec property whose dual has the bounded approximation property. Then the fiber of $\mathcal M(A_u(B_X))$ over every point of $S_X$ is trivial.
\end{corollary}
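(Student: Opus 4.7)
The plan is to observe that this corollary is an immediate specialization of Proposition \ref{PC}. The Kadec property is, by definition, the statement that every point of the unit sphere $S_X$ is a point of continuity, i.e., any net in $\overline{B_X}$ that converges weakly to such a point must also converge in norm. Since we are assuming that $X^{*}$ has the bounded approximation property, the two hypotheses of Proposition \ref{PC} hold simultaneously at each $x \in S_X$.

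I would therefore fix an arbitrary $x \in S_X$ and apply Proposition \ref{PC} directly: it yields that the fiber $\mathcal{M}_x(A_u(B_X))$ consists solely of the evaluation homomorphism $\delta_x$, i.e., the fiber is trivial. Since $x \in S_X$ was arbitrary, the conclusion follows for every point of the sphere. There is no real obstacle here; the argument is a one-line quantifier manipulation once the definition of the Kadec property is spelled out. The substance of the proof has already been absorbed into Proposition \ref{PC}, which in turn uses the Cluster Value Theorem (Theorem \ref{main1}) to produce, for any $\varphi \in \mathcal{M}_x(A_u(B_X))$ and $f \in A_u(B_X)$, a net $(x_\alpha) \subset \overline{B_X}$ weakly converging to $x$ with $f(x_\alpha) \to \varphi(f)$; combined with the point-of-continuity property at $x$, this forces $\varphi(f) = f(x)$.
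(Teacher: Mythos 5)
Your proof is correct and matches the paper's approach exactly: the corollary is an immediate specialization of Proposition \ref{PC}, obtained by unwinding the definition of the Kadec property and applying the proposition at each point of $S_X$. The paper itself treats this as immediate and offers no further argument.
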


The previous proposition provides us with the following examples of spaces $X$ for which the fibers of $\mathcal M(A_u(B_X))$ over points in $S_{X}$ are trivial:
\begin{itemize}

\item if $X$  is locally uniformly rotund (it is easy to check that this implies the Kadec property) and  $X^{*}$ has the bounded approximation property;

\item if $X=d(w,1)$, where  $w=(w_k)_{k\in\mathbb N}$ is a non-increasing sequence of positive numbers so that $\sum_{k\in\mathbb N}w_k= +\infty$ \cite[Proposition 4]{molto1994quasi} (since in that proposition it is not assumed that $w_k\to 0$, we have $X=\ell_1$ as a particular case);

\item if $X=\ell_{M}$ is an Orlicz space (see \cite[Chapter~4]{LinTza73} for the definitions) for which both $M$ and $M^{*}$ satisfy condition $\Delta_{2}$ (this gives that the dual space has the bounded approximation property \cite{LinTza73})  and the Boyd index $\beta_{M} <\infty$ (this implies that the space is asymptotic uniformly convex \cite[Theorem~1.3.11]{BoMa10}, and therefore has the Kadec property). 

\end{itemize}
The result for $d(w,1)$ (and $\ell_1$) is equivalent to \cite[Corollary 2.4]{AroCarLasMae}, where the proof is based on the fact that points on the sphere of these spaces are strong peak points for the uniform algebra generated by  linear functionals. This last claim can be deduced from the proof of  \cite[Theorems 2.4 and 2.6]{AcoLou07}. We remark that, since uniformly convex spaces are locally uniformly rotund, we can conclude that uniformly convex spaces whose duals have the bounded approximation property have trivial fibers on the sphere. However, this is essentially shown in \cite[Corollary 2.3]{AroCarLasMae} without assuming the bounded approximation property for the dual.

\bigskip
By a classical result of Kadec, every separable Banach space can be renormed to be locally uniformly rotund \cite[Theorem 2.6]{DeGoZi93}. As a consequence, any separable Banach space whose dual has the bounded approximation property (which is invariant under renorming), can be renormed so that every fiber of $\mathcal M_{x}(A_u(B_X))$ over points of $S_X$ are trivial. In the opposite direction, we have the following proposition, which shows that the property of having trivial fibers is an isometric condition.

\begin{proposition}\label{fibrarenormada}
 Let $X$ be a Banach space that admits a homogeneous polynomial that is not weakly continuous on bounded sets and let $e\in S_X$. Then there is a renorming $Y=(X,\||\cdot\||)$ such that $e\in S_Y$ and such that the fiber over $e$ in $\mathcal M(A_u(B_Y))$ is not trivial. Moreover the cardinality of $\mathcal M_e(A_u(B_Y))$ is at least $c$.
\end{proposition}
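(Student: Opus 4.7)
The plan is to extract from the hypothesis a bounded sequence $(z_n)\subset X$ with $z_n\to 0$ weakly and $P(z_n)\to\lambda$ for some $\lambda\neq 0$, then build an equivalent norm on $X$ whose closed unit ball contains a one-parameter family of segments $e+\alpha z_n/(2M)$, $\alpha\in[0,1]$, weakly accumulating to $e$, and finally read off a continuum of characters in the fiber over $e$ by taking ultrafilter limits along these segments. First, since $P$ is a continuous $k$-homogeneous polynomial that is not weakly continuous on bounded sets, a standard argument (homogeneity plus polarization) produces a bounded sequence $(z_n)\subset X$ with $z_n\to 0$ weakly and $P(z_n)\to\lambda\neq 0$. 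Second, pick a norming functional $e^*\in X^*$ with $\|e^*\|=e^*(e)=1$ and replace $z_n$ by $w_n:=z_n-e^*(z_n)\,e$; expanding $P(w_n)=P(z_n-e^*(z_n)\,e)$ as a polynomial in the scalar $e^*(z_n)$ and using $e^*(z_n)\to 0$ shows $P(w_n)\to\lambda$, while by construction $e^*(w_n)=0$. Rename $w_n$ as $z_n$ and set $M:=\sup_n\|z_n\|$.

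For the renorming, let $K$ be the closed absolutely convex hull of $B_X\cup\{e+\alpha z_n/(2M):\alpha\in[0,1],\,n\in\mathbb N\}$. Since $K\supset B_X$ and $K\subset (3/2)\,\overline{B_X}$, it is a bounded, closed, absolutely convex neighborhood of the origin, and its Minkowski functional is an equivalent norm $\||\cdot\||$ whose closed unit ball equals $K$. The key observation is $e^*\bigl(e+\alpha z_n/(2M)\bigr)=1$ for every $\alpha\in[0,1]$ and every $n$, so $|e^*|\le 1$ on each of the generating sets, hence on all of $K$. This forces $\|| e\|| \ge e^*(e)=1$; combined with the trivial $\|| e\|| \le 1$, we conclude $e\in S_Y$ for $Y:=(X,\||\cdot\||)$.

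Next, fix any free ultrafilter $\mathcal U$ on $\mathbb N$ and, for each $\alpha\in(0,1]$, define $\varphi^{(\alpha)}\colon A_u(B_Y)\to\mathbb C$ by $\varphi^{(\alpha)}(f):=\lim_{n,\mathcal U}f\bigl(e+\alpha z_n/(2M)\bigr)$; the argument lies in $K=\overline{B_Y}$ and every $f\in A_u(B_Y)$ extends uniformly continuously to $\overline{B_Y}$, so the limit exists. Each $\varphi^{(\alpha)}$ is a continuous character of $A_u(B_Y)$, being a pointwise ultrafilter limit of point evaluations. Since $z_n\to 0$ weakly, $\varphi^{(\alpha)}(x^*)=x^*(e)$ for every $x^*\in X^*$, so $\pi(\varphi^{(\alpha)})=e$. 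Applied to the polynomial $f(x):=P(x-e)\in A_u(B_Y)$, this yields $\varphi^{(\alpha)}(f)=(\alpha/(2M))^k\lambda$, which takes pairwise distinct values as $\alpha$ ranges over $(0,1]$. Hence $\alpha\mapsto\varphi^{(\alpha)}$ is an injection $(0,1]\hookrightarrow\mathcal M_e(A_u(B_Y))$, and the fiber has cardinality at least $\mathfrak c$; in particular it is non-trivial.

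The main obstacle is the renorming step: the new points $e+\alpha z_n/(2M)$ must sit inside the closed unit ball of $Y$ without pushing $e$ off the sphere. The projection $z_n\mapsto z_n-e^*(z_n)\,e$ is the crucial trick; it places the perturbed points exactly on the supporting hyperplane $\{x:e^*(x)=1\}$ of $B_X$ at $e$, so enlarging $B_X$ inside that affine hyperplane does not dislodge $e$ from the unit sphere of the new norm. Checking that the polynomial values $P(z_n)$ are (in the limit) unaffected by this projection is the remaining subtlety and follows from the multinomial expansion of $P$ about $z_n$.
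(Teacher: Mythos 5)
Your overall strategy is sound and, where it works, it is a genuinely different (and in some ways cleaner) route than the paper's: you renorm by taking the Minkowski functional of an enlarged absolutely convex body sitting inside the supporting hyperplane $\{e^*=1\}$, whereas the paper renorms via the isomorphism $X\simeq\langle e\rangle\oplus_\infty \ker(x^*)$; and you produce continuum many characters directly from the one-parameter family of scalings $\alpha\mapsto e+\alpha z/(2M)$ (distinguished by the values $(\alpha/(2M))^k\lambda$ of $P(\cdot-e)$), whereas the paper gets the cardinality statement indirectly from compactness and connectedness of the cluster set $Cl(Q,e)$. Your renorming step, the verification that $e$ stays on the new unit sphere, and the injectivity of $\alpha\mapsto\varphi^{(\alpha)}$ are all correct.

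There is, however, one genuine gap: the very first step, where you claim that non-weak-continuity on bounded sets of a homogeneous polynomial yields a bounded \emph{sequence} $(z_n)$ with $z_n\to 0$ weakly and $P(z_n)\to\lambda\neq 0$. Weak continuity on bounded sets is a statement about nets, and it cannot in general be witnessed by sequences. Take $X=\ell_1$ and $P(x)=\sum_n x_n^2$: this $P$ is a $2$-homogeneous polynomial that is not weakly continuous on bounded sets (so $\ell_1$ satisfies the hypothesis of the proposition), yet by the Schur property every weakly null sequence in $\ell_1$ is norm null, so $P(z_n)\to 0$ for \emph{every} weakly null sequence. Hence no sequence as in your first step exists, and your proof as written does not cover $X=\ell_1$ (nor other spaces where polynomials are weakly sequentially continuous without being weakly continuous on bounded sets). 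The paper works with a net $(x_\alpha)$ throughout precisely for this reason. The repair is routine: extract a bounded net $(z_\alpha)$ with $z_\alpha\to 0$ weakly and $P(z_\alpha)\to\lambda\neq 0$ (using that a homogeneous polynomial weakly continuous at $0$ on bounded sets is weakly continuous on bounded sets, plus boundedness of $P$ on bounded sets to pass to a convergent subnet of values), and replace your ultrafilter limits over $\mathbb{N}$ by accumulation points of the nets $(\delta_{e+\alpha z_\beta/(2M)})_\beta$ in the compact space $\mathcal M(A_u(B_Y))$; every such accumulation point still takes the value $(\alpha/(2M))^k\lambda$ on $P(\cdot-e)$, so the injection into $\mathcal M_e(A_u(B_Y))$ survives. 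All the remaining steps (the projection $z\mapsto z-e^*(z)e$, the estimate $P(w_\alpha)-P(z_\alpha)\to 0$ via the multinomial expansion, and the renorming) go through unchanged with nets.
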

\begin{proof}
Taking, if necessary, the composition of the polynomial of the statement with an affine map, we can take a polynomial $P$ on $X$ and a net $(x_\alpha)\subset X$, with $\|x_\alpha\|< 1/3$ for every $\alpha$, weakly converging to $0$ with $P(x_\alpha)\nrightarrow 0$.
Let $e\in S_{X}$ and $x^{*} \in \overline{ B_{X^*}}$ such that $x^{*}(e)=1$. . At least one of the homogeneous parts (name it $Q$ and suppose it is $k$-homogeneous) of the non-homogeneous polynomial $P(\cdot-e)$ is not weakly continuous at $e$ and  satisfies moreover, after taking a subnet,
$$|Q(e+x_\alpha) - Q(e)|> \varepsilon,$$
for some $\varepsilon>0$ and every $\alpha$.

The space  $X$ is isomorphic to $Y=\langle e\rangle \oplus_\infty Ker(x^{*})$. Let $y_\alpha=e+(x_\alpha-x^{*}(x_\alpha)e)$. Note that $x_\alpha-x^{*}(x_\alpha)e$ belongs to $B_{X}\cap{Ker(x^{*})}$. Then  $(y_\alpha)\subset \overline B_{Y}$, converges weakly to  $e\in \overline B_{Y}$ but
 $$
 | Q(y_\alpha)-  Q(e)|= \Big| Q(e+x_\alpha) +\sum_{j=1}^k\binom{k}{j}\ (- x^{*}(x_\alpha))^j\ \v{ Q}((e+x_\alpha)^{k-j},e^j)-  Q(e)\Big| > \varepsilon,
 $$
 for $\alpha>\alpha_0$ (where $\v{ Q}$ denotes the symmetric $k$-linear form associated to $Q$).  By compactness, $(\delta_{y_\alpha})$ has an accumulation point $\varphi\in\mathcal M(A_u(B_Y))$ that satisfies that $\varphi(Q)\ne \delta_e(Q)$ and $\pi(\varphi)=e$.

 Moreover, since $Cl(Q,e)$ is compact and connected in $\mathbb C$ (see \cite[page 2357]{AroCarLasMae})  and has at least two elements ($Q(e)$ and $\varphi(Q)$), then $card(Cl(Q,e))\ge c$. Since $Cl(Q,e)\subset \hat Q(\mathcal M_e(A_u(B_Y))$ the final comment follows.
\end{proof}

Note also that under a similar hypothesis it was proved in \cite[Proposition 2.6]{AroCarLasMae} (see also  \cite[Theorem 3.1]{aronoberwolfach}) that all fibers over the interior points of $\overline{B_{X^{**}}}$ are non-trivial.
As stated in \cite[Remark 3.2]{aronoberwolfach}, this assumption is necessary and, in some sense, sufficient:  If the restriction of every polynomial to $B_X$ is weakly continuous and $X^*$ has the approximation property, reasoning as in \cite[Theorem 7.2]{aron1995weak} we have
$\mathcal M_z(A_u(B_X)) = \{\delta_z\}$ for all $z \in \overline B_{X^{**}}$. It is unknown if the same happens if $X^{*}$ does not have the approximation property.
It should also be mentioned that in  most Banach spaces we can find  homogeneous polynomials which are not weakly continuous on bounded sets (as in the hypothesis of Proposition \ref{fibrarenormada}), see for example \cite{Din99}.

\subsection{The spectrum of $H_b(U)$}

Now we present some direct consequences of the Cluster Value Theorem for the algebra $H_b(U)$.  In \cite[Proposition 18]{CarGarMae05} it was proved that for any open set $U\subset X$ we have $$\bigcup_n \overline{U_n}^{w^*}\subset \pi(\M(H_b(U))),$$ where $(U_n)_n$ is a sequence of fundamental $U$-bounded sets, and that equality holds when $U$ and the sets $U_n$ are absolutely convex. We now show that we also have an equality whenever the Cluster Value Theorem holds for $H_b(U)$.
 \begin{corollary}
   Let $X$ be Banach and let $U\subset X$ be an open subset such that the \emph{Cluster Value Theorem} holds for $H_b(U)$. Then
   \begin{itemize}
    \item[i)] If $\varphi\in \M(H_b(U))$ and $\varphi\prec A$ then $\pi(\varphi)\in\overline{A}^{w^*}$.
    \item[ii)] $\pi(\M(H_b(U)))=\bigcup_n \overline{U_n}^{w^*}$, where $(U_n)_n$ is any sequence of fundamental $U$-bounded sets.
   \end{itemize}
  In particular, the previous statements are true for any open and balanced subset of a Banach space whose dual  has the bounded approximation property.
  \end{corollary}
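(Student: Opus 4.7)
The plan is to derive both statements directly from the Cluster Value Theorem for $H_b(U)$, without any further analytic input; everything reduces to unpacking the definitions and using the elementary observation about cluster sets recorded immediately after the definition in Section~\ref{Hb}, namely that $Cl_A(f,z)$ is empty precisely when $z \notin \overline{A}^{w^*}$.

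For part (i), I would set $z := \pi(\varphi)$, so that by definition $\varphi \in \M_{z,A}(H_b(U))$. The Cluster Value Theorem for $H_b(U)$, which is hypothesized, asserts $\hat f(\M_{z,A}) = Cl_A(f,z)$ for every $f \in H_b(U)$. Picking any such $f$, the left-hand side is nonempty since it contains $\hat f(\varphi)$; hence $Cl_A(f,z) \neq \emptyset$, and the observation quoted above forces $z \in \overline{A}^{w^*}$.

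For part (ii), the inclusion $\bigcup_n \overline{U_n}^{w^*} \subset \pi(\M(H_b(U)))$ is the content of \cite[Proposition 18]{CarGarMae05}, cited in the paragraph preceding the corollary, and holds with no further hypothesis. For the reverse inclusion, given $\varphi \in \M(H_b(U))$, its continuity on $H_b(U)$ furnishes a $U$-bounded set $A$ with $\varphi \prec A$; because $(U_n)_n$ is fundamental, $A \subset U_n$ for some $n$. Applying part (i) and monotonicity of the weak-star closure then gives $\pi(\varphi) \in \overline{A}^{w^*} \subset \overline{U_n}^{w^*}$. The ``in particular'' clause follows at once: when $U$ is balanced and $X^{*}$ has the bounded approximation property, Theorem~\ref{cvt-hb-bap} supplies the Cluster Value Theorem for $H_b(X)$, and Theorem~\ref{CVT_HbU} transfers it to $H_b(U)$, so the standing hypothesis of the corollary is satisfied.

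There is no genuine obstacle: the whole substance of the argument is concentrated in the Cluster Value Theorem proved earlier. The only point worth flagging is that one should resist trying to prove $\pi(\varphi) \in \overline{A}^{w^*}$ by testing $\varphi$ against elements of $X^{*}$ alone, since that route confines $\pi(\varphi)$ only to the bipolar of $A$ in $X^{**}$, which is in general strictly larger than $\overline{A}^{w^*}$ when $A$ is not absolutely convex. It is precisely the Cluster Value Theorem that detects the raw weak-star closure of $A$ rather than its convexification.
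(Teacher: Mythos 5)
Your proposal is correct and follows essentially the same route as the paper: part (i) is exactly the observation that the Cluster Value Theorem forces $Cl_A(f,\pi(\varphi))$ to be nonempty, hence $\pi(\varphi)\in\overline{A}^{w^*}$, and part (ii) combines this with \cite[Proposition 18]{CarGarMae05} and the fact that every $\varphi$ satisfies $\varphi\prec U_n$ for some $n$. The closing remark about the bipolar is a sensible caution but not needed for the argument.
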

  \begin{proof}
   Since $Cl_A(f,\pi(\varphi))\ne \emptyset$ for every $f\in H_b(U)$, i) follows from the definition of the cluster set $Cl_A$. The second statement follows from the first one since any $\varphi\in \M(H_b(U))$ satisfies $\varphi\prec U_n$ for some $n$.
  \end{proof}

  If $U$ is a balanced open subset and $A$ is $U$-bounded, the polynomial hull of $A$ is defined as (see for example \cite[Section 3]{CarMur12})
  $$
  \hat A_{\mathcal P}=\{z\in X^{**}\,:\, |\overline P(z)|\le \|P\|_A,\textrm{ for every }P\in\mathcal P(X)\},
  $$
  where $\overline P$ stands for the Aron-Berner extension of $P$, see \cite{Din99}.
By the Hahn-Banach Theorem, it is easy to see that $\hat A_{\mathcal P}$ is contained in the $w^*$-closure of the absolutely convex hull of $A$. When the Cluster Value Theorem holds, the absolute convex hull is not necessary: since polynomials are dense in $H_b(U)$, we have $z\in\hat A_{\mathcal P}$ if and only if $\delta_z\prec A$, and then the above corollary implies that $\hat A_{\mathcal P}\subset \overline{A}^{w^*}$.

\bigskip

Finally, proceeding as in Proposition \ref{PC}, it can be proven that if $X$ is a Banach space whose dual has the bounded approximation property, $x$ is a point of continuity of $\|x\|B_X$, and $U\subset X$ is balanced and open then $\varphi\in \M(H_b(U))$ in the fiber of $x$ is an evaluation if and only if $\varphi\prec \|x\|B_X$, or equivalently, $\M_{x,\|x\|B_X}(H_b(U))=\{\delta_x\}$. This happens for every $x\in U$ if the space $X$ has the Kadec property and $X^*$ has the bounded approximation property.

\subsection*{Acknowledgments}
We would like to thank our friends L. C. Garc\'ia-Lirola, A. J. Guirao, V. Montesinos, A. P\'erez and M. Raja for drawing our attention to the Kadec property and for many helpful remarks on the subject. We also thank our friend Manolo Maestre for his useful suggestions and comments.

\end{document}